\numberwithin{equation}{section}
\newtheorem{theorem}{Theorem}[section]
\newtheorem{lemma}[theorem]{Lemma}
\newtheorem{proposition}[theorem]{Proposition}
\newtheorem{corr}[theorem]{Corollary}
\newcommand{\bth}{\begin{theorem}}
\newcommand{\ble}{\begin{lemma}}
\newcommand{\bcor}{\begin{corr}}
 \newcommand{\bdeff}{\begin{deff}}
\newcommand{\bprop}{\begin{proposition}}
\newcommand{\ele}{\end{lemma}}
\newcommand{\ecor}{\end{corr}}
\newcommand{\edeff}{\end{deff}}
\newcommand{\eprop}{\end{proposition}}
\renewcommand{\Pi}{\varPi}
\renewcommand{\epsilon}{\varepsilon}
\newcommand{\parital}{\partial}
\newcommand{\R}{{\mathbb R}}
\newcommand{\N}{{\mathbb N}}
\newcommand{\la}{{\langle}}
\newcommand{\ra}{{\rangle}}
\newcommand{\cd}{{\,\cdot\,}}
\newcommand{\ang}{{\not\negmedspace\nabla}}
\newcommand{\good}{{\not\negmedspace\partial}}
\renewcommand{\S}{{\mathbb{S}}}
\newcommand{\p}{{\tilde{p}}}
\newcommand{\vertiii}[1]{{\left\vert\kern-0.25ex\left\vert\kern-0.25ex\left\vert #1 
    \right\vert\kern-0.25ex\right\vert\kern-0.25ex\right\vert}}
\begin{document}
\bibliographystyle{plain}

\title
{Global existence for quasilinear wave equations satisfying the null
  condition}

\author{Michael Facci}

\author{Jason Metcalfe}
\email{mfacci@live.unc.edu}
\email{metcalfe@email.unc.edu}
\address{Department of Mathematics, University of North Carolina, Chapel Hill, NC 27599-3250}

\thanks{The second author was supported in part by Simons Foundation
  Collaboration Grant 711724 and NSF grant DMS-2054910.}

\subjclass[2020]{35L71, 35L05}
\keywords{Quasilinear wave equations, null condition, global existence,
local energy estimate}

\begin{abstract} We explore the global existence of solutions to
  systems of quasilinear wave equations satisfying the null condition
  when the initial data are sufficiently small.  We adapt an approach
  of Keel, Smith, and Sogge, which relies on integrated local energy
  estimates and a weighted Sobolev estimate that yields decay in $|x|$, by
  using the $r^p$-weighted local energy estimates of Dafermos and
  Rodnianski.  One advantage of this approach is that all
  time-dependent vector fields can be avoided and the proof can be
  readily adapted to address wave equations exterior to star-shaped obstacles.
\end{abstract}

\maketitle

\section{Introduction}
This article focuses on re-examining the proof of small data global existence for
systems of wave equations satisfying the classical null condition in
$(1+3)$-dimensions.  The proof relies only on the translational and
rotational symmetries of the d'Alembertian.  No explicit decay in time
is required.  Instead, in the spirit of the almost global existence
proofs of \cite{KSSsem} and \cite{MS_SIAM}, a weighted
Sobolev estimate that provides decay in $|x|$ is paired with a local
energy estimate.  In this case, however, for the ``good'' derivatives
that the null condition promises, we use the
$r^p$-weighted local energy estimate of \cite{DafRod}.  When
considering quasilinear equations one in essence has geometry that
depends on the solution while the solution in turn depends on the
geometry.  The highest order estimates need to be adapted to this
geometry.  Upon performing the typical manipulations for the
$r^p$-weighted estimates, the possibility that the ``good'' derivative
from the multiplier and the ``good'' derivative from the null
condition land on the same factor is encountered.
Our method introduces
a simple approach to this issue, which \cite{keir} calls
``the problem of multiple good derivatives'', by allowing for different choices of
$p$ in the $r^p$-weighted estimate.

Specifically, we will consider
\begin{equation}
  \label{main_equation}
\begin{split}  \Box u^I& = A^{I,\alpha\beta}_{JK} \partial_\alpha u^J \partial_\beta
  u^K + B^{I,\gamma\alpha\beta}_{JK} \partial_\gamma u^J
  \partial_\alpha\partial_\beta u^K,\\
  u&(0,\cd)=f,\quad \partial_tu(0,\cd)=g.
\end{split}
\end{equation}
Here $(t,x)\in \R_+\times\R^3$, $I=1,2,\dots, M$, and $u=(u^1,\dots,u^M)$.  Repeated Greek
indices are implicitly summed from $0$ to $3$ where
$\partial_0=\partial_t$, lower case Latin indices are summed from $1$
to $3$, and repeated upper case indices are summed
from $1$ to $M$.  The coefficients of the quasilinear terms are
assumed to satisfy
the symmetries
\begin{equation}
  \label{Bsym}
  B^{I,\gamma\alpha\beta}_{JK} = B^{I,\gamma\beta\alpha}_{JK} = B^{K,\gamma\alpha\beta}_{JI}.
\end{equation}
We have truncated \eqref{main_equation} at the quadratic level.  As is
well-known, for problems with small initial data, higher order terms
are typically better behaved.

Even for small, sufficiently regular and decaying initial data,
solutions to \eqref{main_equation} can only be ensured to exist almost
globally, which means that the lifespan of the solution grows
exponentially as the size of the initial data shrinks.  In \cite{christodoulou_null}
and \cite{klainerman_null}, the {\em null condition} was identified as a
sufficient condition for guaranteeing global solutions to
\eqref{main_equation} for small initial data.  We assume
the same here, which requires that
\begin{equation}
  \label{null_condition}
  A^{I,\alpha\beta}_{JK}\xi_\alpha\xi_\beta = 0,\qquad \text{and} \qquad
  B^{I,\gamma\alpha\beta}_{JK} \xi_\alpha\xi_\beta\xi_\gamma=0,\quad
  \text{whenever } \xi_0^2-\xi_1^2-\xi^2_2-\xi^2_3=0.
\end{equation}
These conditions promise that at least one factor of each nonlinear
term is a ``good'' derivative, which are directional derivatives in
directions that are tangent to the light cone $t=|x|$ and are known to
have more rapid decay.  We will fix the notation and more explicitly
describe these in the
next subsection.

The main result of this paper establishes global existence for
\eqref{main_equation} subject to \eqref{null_condition} for
sufficiently small initial data.
\begin{theorem}
 \label{thm:main}  Fix $0<\p<2$, and suppose
 $f, g\in C^\infty(\R^3)$ satisfy
 \begin{equation}
   \label{smalldata}
  \sum_{|\mu|\le N}\Bigl(\|\la r\ra^{\frac{\p}{2}+|\mu|} \partial^\mu \nabla
  f\|_{L^2} + \|\la r\ra^{\frac{\p}{2}+|\mu|} \partial^\mu g\|_{L^2} + \|\la
  r\ra^{\frac{\p-2}{2}+|\mu|} \partial^\mu f\|_{L^2}\Bigr)\le \varepsilon 
 \end{equation}
for $\varepsilon>0$ sufficiently small and $N$ sufficiently large.  Then provided that
\eqref{Bsym} and \eqref{null_condition} hold, \eqref{main_equation}
admits a global solution $u\in C^\infty(\R_+\times \R^3)$.
\end{theorem}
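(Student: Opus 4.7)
The plan is a standard continuity argument applied to a smooth local solution (which exists on a maximal interval $[0,T_*)$ by classical theory). We commute the equation with the vector fields $Z\in\mathcal{Z}=\{\partial_\alpha,\Omega_{ij}\}$ only---translations and rotations, both of which commute with $\Box$---so that no time-dependent vector field ever appears and the proof adapts without modification to the exterior setting mentioned in the abstract. For a sufficiently large $N$ we track three families of norms applied to $Z^\alpha u$, $|\alpha|\le N$: the standard energy $E_N(t)=\sum_{|\alpha|\le N}\|\partial Z^\alpha u(t)\|_{L^2}$, a Keel--Smith--Sogge type integrated local energy norm, and, for values $p$ in a suitable subrange of $(0,\p]$, the Dafermos--Rodnianski $r^p$-weighted norms of $Z^\alpha u$. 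The bootstrap hypothesis is that these are bounded by $C_0\varepsilon$ on $[0,T]$; the goal is to improve each to $C_0\varepsilon/2$, so that by continuity $T_*=\infty$.

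The linear inputs are derived in the usual order: first an energy identity for the perturbed operator $\Box + B^{I,\gamma\alpha\beta}_{JK}\partial_\gamma u^J \partial_\alpha\partial_\beta$, whose principal part is self-adjoint by \eqref{Bsym}, so that integration by parts produces only commutator errors controlled by $\|\partial u\|_{L^\infty}$ times the energy; then an integrated local energy (KSS) estimate obtained with a radial multiplier; then the $r^p$-weighted estimates from testing against the Dafermos--Rodnianski multiplier $r^p\partial_v(r\,\cdot\,)$ for $p$ ranging over $(0,\p]$. These are coupled with a weighted Sobolev inequality of the schematic form
\[
\sup_x \la x\ra^{(1+p)/2}|v(x)| \lesssim \sum_{|\mu|\le 2}\|\la r\ra^{p/2}\partial\Omega^\mu v\|_{L^2},
\]
which turns $L^2$ control into pointwise decay at low order. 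The nonlinear terms are then handled via the null condition, which factors every quadratic expression as a product containing at least one good derivative $\bar\partial u$ pointing along $\partial_v$ or $r^{-1}\Omega$; the bad derivative is absorbed in the energy or KSS norms, and the good derivative extracts the enhanced weighted decay from the $r^p$-estimate.

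The central obstacle---and the novelty flagged in the abstract---is that in the $r^p$-weighted identity the bulk term itself involves the good derivative $r^{p/2}\bar\partial(rZ^\alpha u)$, so pairing it against a null nonlinearity $\bar\partial u\cdot\partial u$ naively requires placing two good derivatives on the same factor, destroying the available decay. The plan is to run the $r^p$-estimate at two distinct exponents $p_1<p_2$ inside $(0,\p]$: the estimate at $p_2$ provides stronger weighted $L^2$ control of one good derivative which, via the weighted Sobolev inequality above, becomes a pointwise $\la x\ra^{-(1+p_2)/2}$ bound; in the $p_1$-estimate the remaining good derivative is then free to land on the other factor, yielding a nonlinear contribution that is integrable in $t$.

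Putting everything together, the coupled hierarchy of the standard energy estimate, the KSS local energy estimate, and the two $r^p$-estimates closes via a Gronwall argument for $\varepsilon$ sufficiently small, producing a global $C^\infty$ solution; the constraints $0<\p<2$ and the weighted data norm in \eqref{smalldata} are exactly what is required to feed into this hierarchy. The most delicate step to verify carefully will be the commutation of the $r^p$-weighted multiplier identity with $Z^\alpha$ and with the quasilinear perturbation, together with a bookkeeping check that the admissible range of $p$ can accommodate the two distinct exponents $p_1,p_2$ while the data norm in \eqref{smalldata} still controls both.
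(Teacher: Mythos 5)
Your overall architecture is the paper's: commuting only with translations and rotations, coupling an energy/KSS local energy estimate for the perturbed operator $\Box_h$ (with $h=-B^{I,\gamma\alpha\beta}_{JK}\partial_\gamma u^J$, symmetric by \eqref{Bsym}) with Dafermos--Rodnianski $r^p$-weighted estimates and a weighted Sobolev bound giving decay in $|x|$, and using the null condition to place a good derivative where the weights demand it. The differences in scaffolding (continuity/bootstrap with Gronwall versus the paper's Picard iteration with direct absorption of small terms) are inessential. The genuine gap is in your resolution of the ``multiple good derivatives'' problem. You propose running the same $r^p$ identity at two exponents $p_1<p_2$ and feeding pointwise decay from the $p_2$-level into the $p_1$-level. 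But you never say how the $p_2$-level estimate is itself closed: if you derive it the same way as at top order (integrating by parts on the full quasilinear term, as in \eqref{rp1}), the problematic bulk term $\int r^{p_2}\,|\good h|\,|\partial Z^{\le N-1}u|^2$ reappears there, and controlling it by your weighted Sobolev step would require weighted control of $\good h$ tied to an exponent larger than $2p_2$ --- an infinite regress that cannot fit under the cap $\tilde{p}<2$. The paper breaks this regress by proving a structurally \emph{different} identity at the lower order, \eqref{rp2}: the radial--radial component $h^{\alpha\beta}\omega_\alpha\omega_\beta\partial_r^2u$ is deliberately \emph{not} integrated by parts, so no good derivative ever falls on $h$ in that piece; the price is a loss of one derivative (the term with $|\partial\partial^{\le 1}u|$ in \eqref{rp2}), which is affordable only because this estimate is applied at order $N-1$ against top-order norms (Proposition~\ref{prop_low}). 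Your sketch contains neither this modified identity nor any substitute mechanism, so as written the hierarchy does not close.

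A second, quantitative omission: ``two distinct exponents $p_1<p_2$'' is not enough. In the top-order estimate the dangerous term is handled by a dyadic decomposition together with \eqref{weighted_Sobolev}, and the resulting geometric series $\sum_j 2^{j(p-\tilde{p}/2)}$ converges only if $\tilde{p}>2p$ (this is exactly hypothesis \eqref{h_assumptions} and the step \eqref{prop25lastpiece} in Proposition~\ref{prop_high}); you also need $p<1$ at top order and $\tilde{p}<2$ for the Hardy-type estimate \eqref{Hardy} and for the lower-order bounds. So the correct bookkeeping is $0<2p<\tilde{p}<2$, with the data norm \eqref{smalldata} expressed at the larger exponent $\tilde{p}$, not merely two exponents chosen freely in $(0,\tilde{p}\,]$. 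With the modified lower-order identity and the constraint $\tilde{p}>2p$ supplied, your plan aligns with the paper's proof; without them, the key step fails.
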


Here we have used $\la r\ra = \sqrt{1+r^2}$.  We note that the assumption \eqref{smalldata} could be refined after we
introduce some additional notation.

A common approach in proofs of long-time existence for nonlinear wave
equations is to rely on the method of invariant 
vector fields and the Klainerman-Sobolev inequality \cite{Klain}. 
Motivated by a desire to study similar wave equations in
exterior domains, with multiple speeds, and with nontrivial background
geometry, multiple approaches now exist that do not necessitate the
use of the full set of invariant vector fields.  A sample of such
results includes \cite{SdTu}, \cite{KSSquasobst}, \cite{MS_null_ext,
  MS_hyp_trap}, \cite{MNS_null, MNS_multi_speed}, \cite{lindblad_nakamura_sogge}.

The majority of the above results still rely on the scaling vector
field $t\partial_t+r\partial_r$.  Works such as
\cite{katayama_kubo}, \cite{keir}, \cite{Yang_quas}
 have pioneered methods for quasilinear equations that do not
rely on any time dependent vector field.  These methods rely on other
means of obtaining $t$-decay.  In the case of, e.g., \cite{keir} this
is accomplished by considering a null foliation.

In the current paper, we explore a technique that is more akin to
\cite{KSSsem}, \cite{MS_SIAM}.  Rather than relying on decay in $t$, this
approach couples decay in $|x|$ with local energy estimates for the
wave equation in order to obtain almost global existence without
assuming special structures on the nonlinearity.  In order to take
advantage of the good derivatives that the null condition ensures, we
will employ the $r^p$-weighted local energy estimate of
\cite{DafRod}.  We note that our method can immediately be adapted to
prove the same result for Dirichlet-wave equations exterior to
star-shaped obstacles.

In \cite{FMM}, this same approach was explored for semilinear wave
equations.  The current result is a bit more involved.  In order to
avoid a loss of regularity at the highest order, the estimates need to
be adapted to allow for small, time-dependent perturbations of
$\Box$.  Upon doing so, ``the problem of multiple good derivatives''
as described in \cite[Section 1.5.4]{keir} is encountered.  In a
typical term encountered within the $r^p$-weighted local energy
estimate, there is a cubic interaction.  Two factors arise from the
quadratic nonlinearity and one from the multiplier.  Amongst these
factors, the multiplier contributes a good derivative and the null
condition promises at least one additional good derivative.  If these
good derivatives fall on different factors, the method of \cite{FMM}
applies easily.  When
adapting the estimates to allow for the perturbations, the
manipulations allow for these good derivatives to both fall on the
same factor.  We propose an alternative to the methods of \cite{keir}
for subverting this problem.  In particular, we consider separately a
lower order energy and a high order energy, which on its surface is
commonplace.  When doing so, however, we consider different choices of
$p$ in the $r^p$-weighted estimates, and this will allow us to avoid
the use of time dependent vector fields to obtain the additional decay
needed for these terms.

\subsection{Notation}  Here fix the notation that will be used
throughout the paper.  We let $u'=\partial u = (\partial_t u,\nabla_x
u)$ denote the space-time gradient.  The notation
\[\Omega = (x_2\partial_3-x_3\partial_2, x_3\partial_1-x_1\partial_3,
  x_1\partial_2-x_2\partial_1)\]
is used for the generators of rotations.  And
\[Z=(\partial_0,\partial_1,\partial_2,\partial_3,\Omega_1,\Omega_2,\Omega_3)\]
will denote our collections of admissible vector fields.  We will use
the shorthand
\[|Z^{\le N} u|=\sum_{|\mu|\le N} |Z^{\mu} u|,\quad |\partial^{\le
    N} u| = \sum_{|\mu|\le N} |\partial^\mu u|.\]

The (spatial) gradient will be frequently (orthogonally) decomposed
into its radial and angular parts:
\[\nabla_x = \frac{x}{r}\partial_r + \ang.\]
Here, as is standard, $r=|x|$ and $\partial_r = \frac{x}{r}\cdot\nabla_x$.
The components of $\partial u$ that are tangent to the light cone are
known to have better decay properties.  We will abbreviate these
``good'' derivatives as
\[\good = (\partial_t+\partial_r, \ang).\]

A key property of the admissible vector fields is that they satisfy:
\[[\Box, Z]=0.\]
We also need to understand how they interact with $\partial$ and
$\good$.  In particular, we have
\begin{equation}
  \label{commutator}
|[Z, \partial]u|\le |\partial u|,\quad |[Z,\good] u|\le
\frac{1}{r}|Zu|, \quad |[\partial, \good]u|\le \frac{1}{r}|\partial u|.  
\end{equation}
In the second computation, we use the fact that $|\ang u|\le
\frac{1}{r}|Zu|$, which follows from $\ang = -\frac{x}{r^2}\times\Omega$.

\section{Local Energy Estimates}
In order to handle the quasilinear nature of the problem, we will rely
on linear estimates for the wave equation on geometries that are a
small, though time-dependent, perturbation of Minkowski space.  In
particular, we consider solutions to
\begin{equation}
  \label{pert_equation}
  \begin{split}
  (\Box_h u)^I&=F^I,\\
  u(0,\cd)=f,&\quad \partial_tu(0,\cd)=g
\end{split}
\end{equation}
where
 \[(\Box_h u)^I = (\partial_t^2 -\Delta)u^I + h^{I,\alpha\beta}_K
  \partial_\alpha\partial_\beta u^K\]
and the perturbations are assumed to satisfy
\begin{equation}
  \label{h_symmetry}
  h^{I,\alpha\beta}_K = h^{K,\alpha\beta}_I = h^{I,\beta\alpha}_K.
\end{equation}
For a differential operator $D$, we use the following
notation
\[|Dh|=\sum_{I,K=1}^M \sum_{\alpha,\beta=0}^3 |Dh^{I,\alpha\beta}_K(t,x)|,
\quad |D(h^{\alpha\beta}\omega_\alpha\omega_\beta)| = \sum_{I,K=1}^M |D(h^{I,\alpha\beta}_K(t,x)\omega_\alpha\omega_\beta)|.\]

In \cite{MS_null_ext}, the integrated local energy estimate was
established for $\Box_h$ and used to prove global existence for
systems of wave equations satisfying the null condition in exterior
domains. 
We will utilize the notation
\[\|u\|_{LE}=\sup_{R\ge 1} R^{-1/2}
  \|u\|_{L^2_tL^2_x([0,\infty)\times\{\frac{R}{2}\le \la x\ra \le R\})},\quad
  \|u\|_{LE^1} = \|(\partial u, u/r)\|_{LE}\]
 and record the following immediate corollary of \cite[Proposition 2.2]{MS_null_ext}.
\begin{proposition}
  Suppose that $h$ satisfies \eqref{h_symmetry} and
\begin{equation}
  \label{h_small}
  |h| = \sum_{I,K=1}^M\sum_{\alpha,\beta=0}^3
  |h^{I,\alpha\beta}_K(t,x)|\le \delta \ll 1.
\end{equation}
with $\delta>0$ sufficiently small.  Then if $u\in C^\infty$ solves
  \eqref{pert_equation} and for every $t$, $|\partial^{\le 1}
  u(t,x)|\to 0$ as $|x|\to \infty$, then
  \begin{multline}\label{LE}
    \|u\|^2_{LE^1} + \|\partial u\|^2_{L^\infty_t L^2_x}\lesssim
    \|\partial u(0,\cd)\|^2_{L^2} + \int_0^\infty\int
    \Bigl(|\partial u| + \frac{|u|}{\la r\ra}\Bigr) |\Box_h u|\,dx\,dt
   \\ + \int_0^\infty\int \Bigl|\partial_\alpha h^{I,\alpha\beta}_K
    \partial_\beta u^K\Bigr|\Bigl(|\partial u^I| +
    \frac{|u^I|}{r}\Bigr)\,dx\,dt
    +\int_0^\infty\int \Bigl|(\partial h^{I,\alpha\beta}_K)
    \partial_\beta u^K\partial_\alpha u^I\Bigr|\,dx\,dt
   \\ + \int_0^\infty\int \frac{|h|}{\la r\ra}|\partial u| \Bigl(|\partial u| +
    \frac{|u|}{r}\Bigr)\,dx\,dt.
  \end{multline}
\end{proposition}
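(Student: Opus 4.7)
The plan is to invoke Proposition 2.2 of \cite{MS_null_ext} essentially as a black box and to verify that, under the current hypotheses, its conclusion takes the form \eqref{LE}. Since the result is billed as an immediate corollary, the primary work is bookkeeping: the estimate in \cite{MS_null_ext} is proved by the multiplier method on exterior domains with obstacle contributions; restricting to $\R^3$ removes the boundary pieces, leaving exactly the structure claimed here.

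Concretely, I would proceed as follows. First, recall that the local energy estimate for the flat d'Alembertian is obtained by pairing the equation with a radial multiplier of the form $X u^I = g(r)\partial_r u^I + \tfrac{g(r)}{2r} u^I$, with $g$ smooth, bounded, and increasing (e.g.\ a dyadic approximation of $r/(r+R)$ supped over $R\ge 1$). Integrating the identity $2(\Box u^I)(X u^I)$ over $[0,\infty)\times\R^3$, using the decay hypothesis $|\partial^{\le 1} u(t,x)|\to 0$ as $|x|\to\infty$ to kill spatial boundary terms, produces $\|u\|^2_{LE^1} + \|\partial u\|^2_{L^\infty_t L^2_x}$ on the left (up to the dyadic sup) against $\|\partial u(0,\cdot)\|^2_{L^2}$ and the source integral $\int\bigl(|\partial u| + \tfrac{|u|}{\la r\ra}\bigr)|\Box u|\,dx\,dt$ on the right. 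Second, I would substitute $\Box u^I = (\Box_h u)^I - h^{I,\alpha\beta}_K\partial_\alpha\partial_\beta u^K$ and integrate by parts once in the perturbation contribution $\int h^{I,\alpha\beta}_K\partial_\alpha\partial_\beta u^K \cdot X u^I\, dx\, dt$, using the symmetry \eqref{h_symmetry} in the $(I,K)$ and $(\alpha,\beta)$ indices to reorganize the derivatives. This integration by parts produces exactly (i)~a contribution $\int |\partial_\alpha h^{I,\alpha\beta}_K \partial_\beta u^K|(|\partial u^I|+|u^I|/r)\, dx\, dt$, (ii)~a contribution $\int|(\partial h^{I,\alpha\beta}_K)\partial_\beta u^K\partial_\alpha u^I|\, dx\, dt$, and (iii)~a term with undifferentiated $h$ of the form $\int h\cdot \partial u\cdot \partial X u\, dx\, dt$, which after bounding $|\partial X u|\lesssim |\partial u| + |u|/r$ divided by $\la r\ra$ gives the last error integral in \eqref{LE}.

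The only subtle point — and the natural place for something to go wrong — is ensuring that no residual term of the form $\int \tfrac{|h|}{\la r\ra}|\partial u|^2\, dx\, dt$ gets left on the right at the level of the principal $LE^1$-norm; here \eqref{h_small} with $\delta\ll 1$ is exactly what is needed, since such a contribution is dominated by $\delta \|u\|^2_{LE^1}$ and can therefore be absorbed into the left-hand side. With that absorption carried out, the remaining collection of integrals is precisely the right-hand side of \eqref{LE}, which completes the derivation.
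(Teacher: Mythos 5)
Your overall strategy is the same as the paper's: the paper records \eqref{LE} as an immediate corollary of \cite[Proposition 2.2]{MS_null_ext} and remarks only that it follows by pairing $(\Box_h u)^I$ with the multiplier $C\partial_t u^I + \tfrac{r}{r+R}\partial_r u^I + \tfrac{1}{r+R}u^I$, integrating over $[0,T]\times\R^3$, and integrating by parts; your proposal is the same multiplier argument with the perturbation moved to the right-hand side and handled by one integration by parts plus the symmetry \eqref{h_symmetry} and absorption via \eqref{h_small}.

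Two points in your sketch, however, do not survive scrutiny as written. First, your multiplier $Xu^I = g(r)\partial_r u^I + \tfrac{g(r)}{2r}u^I$ omits the $C\partial_t u^I$ component; the purely radial multiplier does not produce the energy term $\|\partial u\|^2_{L^\infty_t L^2_x}$ on the left (nor, consequently, the fixed-time boundary contributions $\int h^{I,\alpha\beta}_K\partial_\beta u^K\partial_\alpha u^I\,dx\big|_{t=T}$ that must be absorbed into the energy using $|h|\le\delta$ — this absorption, not only the $LE^1$ one you mention, is where the smallness hypothesis is used). Second, your step (iii) bounds $|\partial Xu|\lesssim(|\partial u|+|u|/r)$, which is false: $\partial Xu$ contains genuine second derivatives of $u$ (e.g.\ $g(r)\partial_\alpha\partial_r u$), and these cannot be dominated by first-order quantities. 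The correct treatment is to use \eqref{h_symmetry} to write the offending quadratic expressions as exact $\partial_t$- or $\partial_r$-derivatives of $h^{I,\alpha\beta}_K\partial_\beta u^K\partial_\alpha u^I$ modulo terms in which the derivative falls on $h$, and then integrate by parts once more; it is exactly this second, symmetric integration by parts that generates the $\int|(\partial h^{I,\alpha\beta}_K)\partial_\beta u^K\partial_\alpha u^I|$ error term in \eqref{LE}, the boundary terms absorbed by \eqref{h_small}, and (from derivatives landing on the multiplier coefficients, which are the only pieces obeying your claimed bound) the $\int \tfrac{|h|}{\la r\ra}|\partial u|(|\partial u|+|u|/r)$ term. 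With those corrections your argument matches the intended proof.
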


The spatial portion of the $LE^1$ norm considers the local energy of
$u$ in an inhomogeneous annulus with a weight that is dictated by the
radii of the annulus.  The estimate captures the fact that this local
energy decays at a sufficiently rapid rate to permit
$L^2$-integrability in time with a bound that (essentially) matches
that provided by the energy estimate (for perturbations of $\Box$).

The proof of this proposition follows upon pairing $(\Box_h u)^I$ with
a multiplier of the form
\[C\partial_t u^I + \frac{r}{r+R}\partial_ru^I
+ \frac{1}{r+R} u^I,\] integrating over $[0,T]\times\R^3$, and integrating by parts.  See also \cite{Sterb}
and \cite{MS_SIAM}.

If we set $\omega=(-1,x/r)$, in order to take advantage of the null
condition in the sequel, we note
\begin{equation}
  \label{nullify}
 \partial_\alpha h^{I,\alpha\beta}_K \partial_\beta u^K
  =
  (\partial_\alpha-\omega_\alpha\partial_r)h^{I,\alpha\beta}_K \partial_\beta
  u^K + \omega_\alpha \partial_r h^{I,\alpha\beta}_K (\partial_\beta
  -\omega_\beta\partial_r)u^K + \partial_r(\omega_\alpha\omega_\beta
  h^{I,\alpha\beta}_K)\partial_r u^K,
\end{equation}
and
\begin{equation}
  \label{nullify2}
  \partial h^{I,\alpha\beta}_K \partial_\beta u^K \partial_\alpha
  u^I
= \partial h^{I,\alpha\beta}_K (\partial_\beta
-\omega_\beta \partial_r)u^K \partial_\alpha u^I
+ \partial h^{I,\alpha\beta} \omega_\beta \partial_r u^K
(\partial_\alpha - \omega_\alpha \partial_r) u^I
+ (\omega_\alpha\omega_\beta \partial h^{I,\alpha\beta}) \partial_r
u^K \partial_r u^I.
\end{equation}
Using these,
we observe that \eqref{LE} implies that
\begin{multline}
  \label{LEnull}
   \|u\|^2_{LE^1} + \|\partial u\|^2_{L^\infty_t L^2_x}\lesssim
    \|\partial u(0,\cd)\|^2_{L^2} + \int_0^\infty\int
    \Bigl(|\partial u| + \frac{|u|}{\la r\ra}\Bigr) |\Box_h u|\,dx\,dt
   \\+\int_0^\infty\int |\partial h| |\good u|\Bigl(|\partial
   u|+\frac{|u|}{r}\Bigr)\,dx\,dt
+\int_0^\infty \int \Bigl(\frac{|h|}{\la r\ra} + |\good h| +
|\partial(\omega_\alpha\omega_\beta h^{\alpha\beta})|\Bigr) |\partial
u|\Bigl(|\partial u|+\frac{|u|}{r}\Bigr)\,dx\,dt.
\end{multline}

We next consider a variant of the $r^p$-weighted local energy
estimate of \cite{DafRod}.  To, e.g., readily control commutators involving
vector fields and $\good$, the following estimate that is akin to a
Hardy inequality is convenient.
\begin{lemma}
Suppose that $u\in C^1([0,\infty)\times \R^3)$ and that for each $t\in
[0,\infty)$, $r^{p/2}|u(t,x)|\to 0$ as $|x|\to \infty$.  Then,
provided $0<p<2$,
  \begin{equation}
    \label{Hardy}
 \|r^{\frac{p-2}{2}}u\|_{L^\infty_t L^2_x} +  \|r^{\frac{p-3}{2}} u\|_{L^2_t
   L^2_x} \lesssim \|r^{\frac{p-2}{2}}u(0,\cd)\|_{L^2} +
 \|r^{\frac{p-3}{2}} \good (ru)\|_{L^2_t L^2_x}.
  \end{equation}
\end{lemma}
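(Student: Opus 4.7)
The plan is to reduce to a one-dimensional weighted Hardy inequality by introducing $\phi = r u$, to run the energy-type identity along the null direction $L = \partial_t + \partial_r$, and to absorb a cross term using Cauchy--Schwarz. The hypothesis $0 < p < 2$ is used both to generate a favorable positive coefficient in the bulk and to guarantee that the spatial boundary contributions at $r=0$ and $r=\infty$ vanish.

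\medskip

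First I would set $\phi(t,r,\omega) = r\, u(t,r,\omega)$ and compute
\[
L(r^{p-2}\phi^{2}) \;=\; 2\, r^{p-2}\phi\, L\phi \;+\; (p-2)\, r^{p-3}\phi^{2},
\]
then rewrite this identity as
\[
2\, r^{p-2}\phi\, L\phi \;=\; \partial_r(r^{p-2}\phi^{2}) \;+\; \partial_t(r^{p-2}\phi^{2}) \;+\; (2-p)\, r^{p-3}\phi^{2}.
\]
Integrating over $[0,T]\times(0,\infty)\times\mathbb{S}^{2}$ with respect to $dt\,dr\,d\sigma$, the $r$-boundary term at infinity is $\int_{\mathbb{S}^{2}} r^{p}|u|^{2}\,d\sigma \to 0$ by hypothesis, and at $r=0$ it vanishes because $p>0$ and $u$ is $C^{1}$. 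This yields
\begin{equation*}
(2-p)\!\int_{0}^{T}\!\!\int_{\mathbb{S}^{2}}\!\!\int_{0}^{\infty}\! r^{p-3}\phi^{2}\,dr\,d\sigma\,dt
+ \int_{\mathbb{S}^{2}}\!\!\int_{0}^{\infty}\! r^{p-2}\phi^{2}(T)\,dr\,d\sigma
\le \int_{\mathbb{S}^{2}}\!\!\int_{0}^{\infty}\! r^{p-2}\phi^{2}(0)\,dr\,d\sigma
+ 2\!\int_{0}^{T}\!\!\int_{\mathbb{S}^{2}}\!\!\int_{0}^{\infty}\! r^{p-2}\phi\, L\phi\,dr\,d\sigma\,dt.
\end{equation*}

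\medskip

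Next I would convert back to Euclidean measure using $dx = r^{2}\,dr\,d\sigma$, which turns $r^{p-3}\phi^{2}\,dr\,d\sigma$ into $r^{p-3}|u|^{2}\,dx$, turns $r^{p-2}\phi^{2}\,dr\,d\sigma$ into $r^{p-2}|u|^{2}\,dx$, and turns the cross term into $r^{p-4}\phi\, L\phi\,dx$. Applying Cauchy--Schwarz on the cross term,
\[
2\,r^{p-4}|\phi||L\phi|
\;\le\; \tfrac{2-p}{2}\, r^{p-5}\phi^{2} + C_{p}\, r^{p-3}|L\phi|^{2}
\;=\; \tfrac{2-p}{2}\, r^{p-3}|u|^{2} + C_{p}\, r^{p-3}|L(ru)|^{2},
\]
I can absorb half of the bulk term on the left. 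Since $|L(ru)| \le |\good(ru)|$ pointwise, this produces
\[
\tfrac{2-p}{2}\, \|r^{\frac{p-3}{2}}u\|_{L^{2}_{t}L^{2}_{x}([0,T]\times\mathbb{R}^{3})}^{2}
+ \|r^{\frac{p-2}{2}}u(T,\cd)\|_{L^{2}}^{2}
\;\lesssim\; \|r^{\frac{p-2}{2}}u(0,\cd)\|_{L^{2}}^{2}
+ \|r^{\frac{p-3}{2}}\good(ru)\|_{L^{2}_{t}L^{2}_{x}}^{2}.
\]
Taking the supremum over $T \ge 0$ and then a square root delivers \eqref{Hardy}.

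\medskip

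The essential step, and the only one that is not completely routine, is the algebraic identity expressing $2 r^{p-2}\phi L\phi$ as $L(r^{p-2}\phi^{2}) + (2-p) r^{p-3}\phi^{2}$ (equivalently, splitting $L = \partial_t + \partial_r$ so that the $\partial_t$ part yields a nonnegative terminal contribution and the $\partial_r$ part produces the desired coefficient $(2-p)$). The restriction $0 < p < 2$ enters in two places: $p<2$ furnishes the positive coefficient needed to dominate the Cauchy--Schwarz remainder, and $p>0$ kills the inner boundary contribution at $r=0$. No genuine difficulty arises; the choice of $L$ rather than $\partial_r$ as the multiplier is what makes the data term $\|r^{(p-2)/2}u(0,\cd)\|_{L^{2}}$ appear on the right-hand side instead of being absent.
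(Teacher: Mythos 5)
Your proof is correct and is essentially the paper's own argument: both multiply $r^{p-2}(ru)^2$-type quantities along $\partial_t+\partial_r$, integrate by parts so the $t$-boundary gives the energy terms and the radial derivative of the weight gives the $(2-p)r^{p-3}$ bulk, then use Cauchy--Schwarz to absorb and take a supremum over $T$ (your product-rule identity for $L(r^{p-2}\phi^2)$ is the same computation the paper phrases as writing $r^{p-3}u^2=\frac{1}{p-2}(\partial_t+\partial_r)(r^{p-2})(ru)^2$ and integrating by parts). No substantive differences to report.
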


\begin{proof}
 We consider
\[\int_0^T\int r^{p-3} u^2\,dx\,dt = \frac{1}{p-2}\int_0^T\int_{\S^2}\int_0^\infty
  (\partial_t+\partial_r)(r^{p-2}) (ru)^2\,dr\,d\sigma\,dt.\]
Upon integrating by parts, this is
\[=\frac{1}{p-2} \int r^{p-2} u^2(T,x)\,dx -\frac{1}{p-2}\int
  r^{p-2}u^2(0,x)\,dx
+\frac{2}{2-p} \int_0^T\int_{\S^2}\int_0^\infty r^{p-2} (ru)
(\partial_t+\partial_r)(ru)\,dr\,d\sigma\,dt.\]
If we apply the Schwarz inequality to the last term, the above two
equations yield (where the norms in time are taken over $[0,T]$)
\[\|r^{\frac{p-3}{2}}u\|^2_{L^2_tL^2_x} +
  \|r^{\frac{p-2}{2}}u(T,\cd)\|^2_{L^2_x} \lesssim
  \|r^{\frac{p-2}{2}}u(0,\cd)\|^2_{L^2_x} + 
  \|r^{\frac{p-3}{2}}
  u\|_{L^2_tL^2_x}\|r^{\frac{p-3}{2}}(\partial_t+\partial_r)(ru)\|_{L^2_tL^2_x}.\]
Using that $ab\le c a^2 + \frac{1}{4c}b^2$
for any $c>0$, the first factor of the last term can be
absorbed into the left side.  The proof
is then completed by taking a supremum over $T$.
\end{proof}

The next result is the main linear estimate used in our proof of
global existence.  It is based on the $r^p$-weighted local energy
estimates of \cite{DafRod}.  Here we have adapted the proof to allow
for small, time-dependent perturbations of the geometry in order to
accommodate the quasilinear nature of the problem.  Due to the
``problem of multiple good derivatives,'' we do so in two different
ways.  The first estimate, which will be applied with the highest
order of vector fields, uses integration by parts on the
perturbation in the most standard way.  Upon doing so, it is possible
that both good derivatives will land on the perturbation.  To handle
this, the second estimate, which will be used at a lower order and
with a higher $p$, will
be employed.  In this second case, if neither of the derivatives in the quasilinear term are
good derivatives, no further integration by parts will be applied.
This will keep the two good derivatives on separate terms, which will
each be at this lower order.

\begin{theorem}\label{thm:rp}  Suppose $h\in C^2([0,\infty)\times \R^3)$
  satisfies \eqref{h_symmetry}.  Let $u\in C^2([0,\infty) \times
  \R^3)$ be so that for each $t\ge 0$,
\[r^{\frac{p+2}{2}}|\partial^{\le 1} u(t,x)|\to 0
  \text{ as } |x|\to \infty.\]
Then, for any $0<p<2$,
  \begin{multline}
    \label{rp1}
    \|r^{\frac{p-1}{2}} \good u\|^2_{L^2_tL^2_x} + \|
    r^{\frac{p-3}{2}} u\|^2_{L^2_tL^2_x} + 
\|r^{\frac{p}{2}} \good u\|^2_{L^\infty_t L^2_x} + \|r^{\frac{p-2}{2}}
u\|^2_{L^\infty_t L^2_x}
    \lesssim 
\|r^{\frac{p}{2}} \good u(0,\cd)\|^2_{L^2_x} + \|r^{\frac{p-2}{2}}
u(0,\cd)\|^2_{L^2_x}
\\+ \sup_t \Bigl(\int r^p |h| |\partial u|
    \Bigl(|\partial u|+r^{-1}|u|\Bigr)\,dx\Bigr)
\\+ \int_0^\infty \int r^p |\Box_h u| \Bigl(|\good
    u| + \frac{|u|}{r}\Bigr)\,dx\,dt + \int_0^\infty \int r^p |\partial h| |\good u|\Bigl(|\good
u|+\frac{|u|}{r}\Bigr)\,dx\,dt
\\
+ \int_0^\infty \int r^p
\Bigl(\frac{|h|}{r}+|\good h| + |\partial(\omega_\alpha\omega_\beta
h^{\alpha\beta})|\Bigr) \ |\partial u|\Bigl(|\partial
u|+\frac{|u|}{r}\Bigr)\,dx\,dt, 
  \end{multline}
 and 
     \begin{multline}
     \label{rp2}
    \|r^{\frac{p-1}{2}} \good u\|^2_{L^2_tL^2_x} + \|
     r^{\frac{p-3}{2}} u\|^2_{L^2_tL^2_x} + \|r^{\frac{p}{2}} \good u\|^2_{L^\infty_t L^2_x} + \|r^{\frac{p-2}{2}}
 u\|^2_{L^\infty_t L^2_x}
     \lesssim 
 \|r^{\frac{p}{2}} \good u(0,\cd)\|^2_{L^2_x} + \|r^{\frac{p-2}{2}}
 u(0,\cd)\|^2_{L^2_x}
 \\+ \sup_t \Bigl(\int r^p |h| \Bigl(|\partial u|+\frac{|u|}{r}\Bigr)\Bigl(|\good
 u|+\frac{|u|}{r}\Bigr)\,dx\Bigr)
 +\int_0^\infty \int r^p |\Box_h u|\Bigl(|\good
 u|+\frac{|u|}{r}\Bigr)\,dx\,dt
 \\+\int_0^\infty \int r^{p-1} |h| |\partial u| \Bigl(|\good
 u|+\frac{|u|}{r}\Bigr)\,dx\,dt
+\int_0^\infty \int r^p |\partial h| |\good u|\Bigl(|\good
 u|+\frac{|u|}{r}\Bigr)\,dx\,dt
 \\+ \int_0^\infty \int r^p \Bigl(|\good
 h|+|h^{\alpha\beta}\omega_\alpha\omega_\beta|\Bigr)
 |\partial\partial^{\le 1} u|
 \Bigl(|\good u|+\frac{|u|}{r}\Bigr)\,dx\,dt 
+\int_0^\infty\int r^{p-3} \Bigl(r|\partial h|+|h|\Bigr) |u|^2.
  \end{multline}
\end{theorem}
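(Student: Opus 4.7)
The plan is to pair the perturbed equation $\Box_h u = F$ with a Dafermos--Rodnianski style multiplier proportional to $r^{p-2}(\partial_t+\partial_r)(ru)$ and integrate over $[0,T]\times\R^3$. Writing $\psi^I = ru^I$ and using the standard polar-coordinate decomposition of $r\Box u^I$ into a $(1{+}1)$-dimensional wave operator acting on $\psi^I$ minus an angular piece, integration by parts in $t$ and in $r$ on the flat part of the equation produces, with the correct signs, the bulk quantity $\|r^{(p-1)/2}\good u\|^2_{L^2_tL^2_x}$ together with boundary terms at $t=0$ matching $\|r^{p/2}\good u(0,\cd)\|_{L^2}^2+\|r^{(p-2)/2}u(0,\cd)\|_{L^2}^2$ and boundary terms at $t=T$ matching the two $L^\infty_tL^2_x$ quantities on the left of \eqref{rp1}--\eqref{rp2}. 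The $\|r^{(p-3)/2}u\|^2_{L^2_tL^2_x}$ and $\|r^{(p-2)/2}u\|^2_{L^\infty_tL^2_x}$ pieces on the left are recovered by invoking the Hardy-type inequality \eqref{Hardy}, which trades $r^{(p-3)/2}u$ for $r^{(p-3)/2}\good(ru)$, already controlled by what is in hand.

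The new work is handling the perturbation $h^{I,\alpha\beta}_K\partial_\alpha\partial_\beta u^K$ paired against the multiplier, and the two different orchestrations of this step yield the two estimates. For \eqref{rp1}, I would integrate by parts one $\partial_\alpha$ off the Hessian, using the symmetry \eqref{h_symmetry} to assemble clean vector-valued quadratic expressions. Three families of terms result: a boundary-in-$t$ contribution feeding the $\sup_t\int r^p|h||\partial u|(|\partial u|+|u|/r)\,dx$ term on the right of \eqref{rp1}; a bulk term of schematic form $r^p(\partial h)(\partial u)\cdot(\text{multiplier})$ which, after substituting the null decompositions \eqref{nullify} and \eqref{nullify2}, forces a good derivative onto every factor except the one carrying the scalar coefficient $\partial(\omega_\alpha\omega_\beta h^{\alpha\beta})$; and a bulk term where the derivative falls on the weight, contributing the $|h|/r$ coefficient appearing in the last line of \eqref{rp1}.

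For \eqref{rp2}, I instead refuse to integrate by parts on the perturbation whenever neither of $\partial_\alpha$, $\partial_\beta$ points in a good direction: a direct Cauchy--Schwarz against the multiplier produces the error $\int\!\int r^{p-1}|h||\partial u|(|\good u|+|u|/r)\,dx\,dt$ and, at $t=T$, the $\sup_t$ boundary contribution on the right of \eqref{rp2}. In the remaining cases, at least one of $\partial_\alpha$, $\partial_\beta$ can be split as $\omega_\cdot\partial_r$ plus a good direction, and I integrate by parts only on that good direction; the resulting coefficient involves $|\good h|$ or the radial piece $|h^{\alpha\beta}\omega_\alpha\omega_\beta|$ multiplying $|\partial\partial^{\le 1}u|$, matching the penultimate integrand in \eqref{rp2}, while the final $\int r^{p-3}(r|\partial h|+|h|)|u|^2$ term collects the zeroth-order residue coming from the multiplier's $u/r$ piece. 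In both estimates the smallness of $h$ lets the positive bulk quantities be reabsorbed on the left. The main obstacle is precisely the ``problem of multiple good derivatives'' flagged in the introduction: after the integration by parts used for \eqref{rp1}, the multiplier's good derivative $\partial_t+\partial_r$ can pair with the null condition's good derivative on a single factor of $u$, producing an untreatable term; providing the alternative \eqref{rp2}, in which that integration by parts is omitted in the offending cases, is exactly what will let the subsequent global-existence argument apply \eqref{rp1} at top order and \eqref{rp2} at an intermediate order with a larger $p$.
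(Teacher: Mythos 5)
Your proposal follows essentially the same route as the paper: the Dafermos--Rodnianski multiplier $r^p\bigl(\partial_t+\partial_r+\tfrac{1}{r}\bigr)u = r^{p-1}(\partial_t+\partial_r)(ru)$ (your stated $r^{p-2}(\partial_t+\partial_r)(ru)$ is off by one power of $r$, evidently a slip), the Hardy estimate \eqref{Hardy} to recover the lower-order terms, integration by parts combined with the null decompositions \eqref{nullify}--\eqref{nullify2} for \eqref{rp1}, and, for \eqref{rp2}, leaving the purely radial piece $h^{\alpha\beta}\omega_\alpha\omega_\beta\partial_r^2u$ untouched so that the two good derivatives never both land on the perturbation. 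The only inaccuracies are bookkeeping: the direct estimate of that untouched piece is what produces the $|h^{\alpha\beta}\omega_\alpha\omega_\beta|\,|\partial\partial^{\le 1}u|$ contribution in the penultimate term of \eqref{rp2} (not the $r^{p-1}|h||\partial u|$ term or the $\sup_t$ boundary term, which instead arise from the weight derivatives and time boundaries in the integrations by parts on the remaining pieces), but carried out carefully your plan yields exactly the stated right-hand sides.
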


The reader should have in mind that in the sequel we will choose the
perturbation to have the form $h^{I,\alpha\beta}_K =
-B^{I,\gamma\alpha\beta}_{JK}\partial_\gamma u^J$.
We note that the last term in \eqref{rp1} could potentially have
multiple good derivatives when the perturbation $h$ is itself based in
a good
derivative.  As indicated above, to remedy this, we will later use \eqref{rp2} with its $p$
chosen to be more than twice that used in \eqref{rp1}.  We also point
out that the second to last term of \eqref{rp2} has a factor
containing more derivatives than what appear in the left, which
requires that this estimate be applied with a lower number of vector
fields so this loss of regularity can be overcome.

\begin{proof}
  We consider
  \[\int_0^T \int r^p \Box_h u^I
    \Bigl(\partial_t+\partial_r+\frac{1}{r}\Bigr) u^I\,dx\,dt.\]

  To start, we argue as in \cite{FMM} and note that
  \[\int_0^T \int r^p \Box u^I
    \Bigl(\partial_t+\partial_r+\frac{1}{r}\Bigr) u^I\,dx\,dt
=\int_0^T \int r^p
\Bigl[\Bigl(\partial_t-\partial_r\Bigr)\Bigl(\partial_t+\partial_r\Bigr)(ru^I)
- \ang\cdot\ang (ru^I)\Bigr] \Bigl(\partial_t + \partial_r\Bigr)(ru^I)\,dr\,d\sigma\,dt.\]
Integrating by parts and using $[\nabla,\partial_r] = \frac{1}{r}\ang$, we see that the right side is
\begin{multline*}
  =\frac{1}{2}\int_0^T\int
    r^p\Bigl(\partial_t-\partial_r\Bigr)\Bigl|\Bigl(\partial_t+\partial_r\Bigr)(ru)\Bigr|^2\,dr\,d\sigma\,dt
    + \frac{1}{2}\int_0^T\int r^p
    \Bigl(\partial_t+\partial_r\Bigr)|\ang(ru)|^2\,dr\,d\sigma\,dt
    \\+ \int_0^T\int r^{p-1}|\ang(ru)|^2\,dr\,d\sigma\,dt.
  \end{multline*}
  Subsequent integrations by parts give that this is
  \begin{multline*}
  = \frac{1}{2} \|r^{\frac{p-2}{2}}\good (ru)(T,\cd)\|^2_{L^2} -
\frac{1}{2} \|r^{\frac{p-2}{2}}\good (ru)(0,\cd)\|^2_{L^2}
\\+ \frac{p}{2}\int_0^T\int
  r^{p-1}\Bigl|\Bigr(\partial_t+\partial_r\Bigr)(ru)\Bigr|^2\,dr\,d\sigma\,dt
  +\Bigl(1-\frac{p}{2}\Bigr)\int_0^T\int r^{p-1}|\ang(ru)|^2\,dr\,d\sigma\,dt.
  \end{multline*}
Provided that $0<p<2$, we can combine this with \eqref{Hardy} to
obtain
\begin{multline}
  \label{no_pert}
  \|r^{\frac{p-1}{2}} \good u\|^2_{L^2_tL^2_x} + \|r^{\frac{p-3}{2}}
  u\|^2_{L^2_tL^2_x} + \|r^{\frac{p}{2}} \good u(T,\cd)\|^2_{L^2_x} + \|r^{\frac{p-2}{2}}
u(T,\cd)\|^2_{L^2_x} \\\lesssim \|r^{\frac{p}{2}} \good u(0,\cd)\|^2_{L^2_x} + \|r^{\frac{p-2}{2}}
u(0,\cd)\|^2_{L^2_x} +
 \Bigl| \int_0^T \int r^p \Box u^I \Bigl(\partial_t+\partial_r+\frac{1}{r}\Bigr)u^I\,dx\,dt\Bigr|.
\end{multline}

We now consider the perturbation terms.  Using, again, that
$[\nabla,\partial_r] = \frac{1}{r}\ang$ and the symmetries
\eqref{h_symmetry}, we obtain  
   \begin{multline*}\int_0^T\int r^p h^{I,\alpha\beta}_K
    \partial_\alpha\partial_\beta
    u^K \Bigl(\partial_t + \partial_r + \frac{1}{r}\Bigr) u^I\,dx\,dt
=\int r^p h^{I,0\beta}_K \partial_\beta
u^K\Bigl(\partial_t+\partial_r+\frac{1}{r}\Bigr)u^I\,dx\Bigl|_{t=0}^T\\
-\int_0^T \int r^{p-1} h^{I,j\beta}_K \partial_\beta u^K \ang_j
u^I\,dx\,dt
+\int_0^T\int r^{p-2} \omega_j
h^{I,j\beta}_K \partial_\beta u^K u^I\,dx\,dt
\\-\frac{1}{2}\int_0^T \int r^p h^{I,\alpha\beta}_K 
\Bigl(\partial_t+\partial_r+\frac{1}{r}\Bigr)\Bigl[\partial_\beta u^K\partial_\alpha u^I\Bigr]\,dx\,dt
-\int_0^T \int r^p \partial_\alpha h^{I,\alpha\beta}_K \partial_\beta
u^K \Bigl(\partial_t+\partial_r+\frac{1}{r}\Bigr)u^I\,dx\,dt
\\-p\int_0^T \int r^{p-1} \omega_j h^{I,j\beta}_K \partial_\beta u^K\Bigl(\partial_t+\partial_r+\frac{1}{r}\Bigr)u^I\,dx\,dt,
\end{multline*}
where, as above, we have set $\omega = (-1, x/r)$.
And thus, arguing as in \eqref{nullify} and \eqref{nullify2} and
integrating by parts, this is
\begin{multline*}
  =\int r^p h^{I,0\beta}_K \partial_\beta
  u^K\Bigl(\partial_t+\partial_r+\frac{1}{r}\Bigr)u^I\,dx\Bigl|_{t=0}^T
-\frac{1}{2}\int r^p h^{I,\alpha\beta}_K \partial_\beta u^K
  \partial_\alpha u^I\,dx\Bigl|_{t=0}^T
  \\
-\int_0^T \int r^{p-1} h^{I,j\beta}_K \partial_\beta u^K \ang_j u^I\,dx\,dt
+\int_0^T\int r^{p-2} h^{I,j\beta}_K u^I \partial_\beta u^K\,dx\,dt
\\+\frac{p+1}{2}\int_0^T \int r^{p-1} h^{I,\alpha\beta}_K 
\partial_\beta u^K\partial_\alpha u^I\,dx\,dt
+\frac{1}{2}\int_0^T \int r^{p} \Bigl(\partial_t+\partial_r\Bigr)h^{I,\alpha\beta}_K 
\partial_\beta u^K\partial_\alpha u^I\,dx\,dt
\\-\int_0^T \int r^p \partial_\alpha h^{I,\alpha\beta}_K
(\partial_\beta-\omega_\beta \partial_r)
u^K \Bigl(\partial_t+\partial_r+\frac{1}{r}\Bigr)u^I\,dx\,dt
-\int_0^T \int r^p \omega_\beta\Bigl(\partial_\alpha-\omega_\alpha\partial_r\Bigr) h^{I,\alpha\beta}_K \partial_r
u^K \Bigl(\partial_t+\partial_r+\frac{1}{r}\Bigr)u^I\,dx\,dt
\\-\int_0^T \int r^p \omega_\beta\omega_\alpha\partial_r h^{I,\alpha\beta}_K \partial_r
u^K \Bigl(\partial_t+\partial_r+\frac{1}{r}\Bigr)u^I\,dx\,dt
-p\int_0^T \int r^{p-1} \omega_j h^{I,j\beta}_K \partial_\beta u^K\Bigl(\partial_t+\partial_r+\frac{1}{r}\Bigr)u^I\,dx\,dt. 
\end{multline*}
From this, when combined with \eqref{no_pert}, the bound \eqref{rp1}
follows immediately.

We next consider \eqref{rp2}.  We write
  \[h^{I,\alpha\beta}_K \partial_\alpha \partial_\beta u^K
    = h^{I,\alpha\beta}_K\omega_\alpha\omega_\beta \partial_r^2 u^K
    + h^{I,\alpha\beta}_K \partial_\alpha \Bigl(\partial_\beta
    -\omega_\beta\partial_r\Bigr)u^K
    + h^{I,\alpha\beta}_K \Bigl(\partial_\alpha-\omega_\alpha\partial_r\Bigr)
    \omega_\beta \partial_r u^K.
  \]
For \eqref{rp2}, we need not further modify the terms involving
$h^{I,\alpha\beta}_K \omega_\alpha \omega_\beta \partial_r^2 u^K$.
For those that remain, we notice that
  \begin{multline*}\int_0^T\int r^p h^{I,\alpha\beta}_K
    \Bigl(\partial_\alpha\partial_\beta-\omega_\alpha\omega_\beta \partial_r^2\Bigr)
    u^K \Bigl(\partial_t + \partial_r + \frac{1}{r}\Bigr) u^I\,dx\,dt
=\int r^p h^{I,0\beta}_K \partial_\beta
u^K\Bigl(\partial_t+\partial_r+\frac{1}{r}\Bigr)u^I\,dx\Bigl|_{t=0}^T\\
-p\int_0^T \int r^{p-1}\omega_j h^{I,j\beta,}_K \partial_\beta u^K
\Bigl(\partial_t+\partial_r+\frac{1}{r}\Bigr)u^I\,dx\,dt
-\int_0^T\int r^p \partial_\alpha h^{I,\alpha\beta}_K \Bigl(\partial_\beta
-\omega_\beta\partial_r\Bigr)u^K
\Bigl(\partial_t+\partial_r+\frac{1}{r}\Bigr)u^I\,dx\,dt
\\-\int_0^T\int r^p (\partial_\alpha
-\omega_\alpha\partial_r)h^{I,\alpha\beta}_K \omega_\beta \partial_r
u^K \Bigl(\partial_t+\partial_r+\frac{1}{r}\Bigr)u^I\,dx\,dt
\\+(p+2)\int_0^T\int r^{p-1}
h^{I,\alpha\beta}_K\omega_\alpha\omega_\beta \partial_ru^K\Bigl(\partial_t+\partial_r+\frac{1}{r}\Bigr)u^I\,dx\,dt
\\-\int_0^T \int r^{p-1} h^{I,j\beta}_K \partial_\beta u^K \ang_j
u^I\,dx\,dt + \int_0^T\int r^{p-2} h^{I,j\beta}_K \omega_j
u^I \partial_\beta u^K\,dx\,dt
\\-\int_0^T\int r^{p-2} h^{I,\alpha\beta}_K \omega_\alpha\omega_\beta
u^I \partial_r u^K\,dx\,dt
\\-\frac{1}{2}\int_0^T\int r^ph^{I,\alpha\beta}_K
\Bigl(\partial_t+\partial_r+\frac{1}{r}\Bigr)\Bigl[(\partial_\beta -
\omega_\beta\partial_r)u^K \partial_\alpha u^I + \omega_\beta \partial_r u^K(\partial_\alpha-\omega_\alpha\partial_r)u^I\Bigr]\,dx\,dt.
\end{multline*}
A subsequent integration by parts gives that 
\begin{multline*}
  -\frac{1}{2}\int_0^T\int r^ph^{I,\alpha\beta}_K
\Bigl(\partial_t+\partial_r+\frac{1}{r}\Bigr)\Bigl[(\partial_\beta -
\beta\partial_r)u^K \partial_\alpha u^I + \omega_\beta \partial_r
u^K(\partial_\alpha-\omega_\alpha\partial_r)u^I\Bigr]\,dx\,dt
\\=-\frac{1}{2}\int r^p
h^{I,\alpha\beta}_K\Bigl[(\partial_\beta-\omega_\beta \partial_r)u^K\partial_\alpha
u^I + \omega_\beta\partial_r
u^K(\partial_\alpha-\omega_\alpha\partial_r)u^I\Bigr]\,dx\Bigl|_0^T
\\+\frac{1}{2}\int_0^T\int r^p
(\partial_t+\partial_r)h^{I,\alpha\beta}_K\Bigl[(\partial_\beta-\omega_\beta\partial_r)u^K\partial_\alpha
u^I + \omega_\beta\partial_r
u^K(\partial_\alpha-\omega_\alpha\partial_r)u^I\Bigr]\,dx\,dt
\\+\frac{p+1}{2}\int_0^T\int r^{p-1}h^{I,\alpha\beta}_K\Bigl[ (\partial_\beta-\omega_\beta\partial_r)u^K\partial_\alpha
u^I + \omega_\beta\partial_r
u^K(\partial_\alpha-\omega_\alpha\partial_r)u^I\Bigr]\,dx\,dt.
\end{multline*}
Moreover,
\begin{multline*}
\int_0^T\int r^{p-2} h^{I,j\beta}_K \omega_j
u^I \partial_\beta u^K\,dx\,dt  -\int_0^T\int r^{p-2} h^{I,\alpha\beta}_K \omega_\alpha\omega_\beta
u^I \partial_r u^K\,dx\,dt = \frac{1}{2}\int r^{p-2}h^{I,j0}_K u^K
u^I\,dx\Bigl|_0^T \\- \frac{p-1}{2}\int_0^T\int r^{p-3}\omega_j\omega_l
h^{I,jl}_K u^K u^I\,dx\,dt -\frac{1}{2}\int_0^T\int r^{p-3} h^{I,jj}_K
u^K u^I\,dx\,dt - \frac{1}{2}\int_0^T\int
r^{p-2}\omega_j \partial_\beta h^{I,j\beta}_K u^K u^I\,dx\,dt \\+
\frac{p}{2}\int_0^T\int r^{p-3}
h^{I,\alpha\beta}_K\omega_\alpha\omega_\beta u^K u^I\,dx\,dt +
\frac{1}{2}\int_0^T\int r^{p-2}\partial_r h^{I,\alpha\beta}_K
\omega_\alpha\omega_\beta u^K u^I\,dx\,dt.
\end{multline*}
Combining \eqref{no_pert} with the preceding three equations
yields \eqref{rp2}.
\end{proof}

The main source of decay that we rely upon is the following
weighted Sobolev estimate that originates from \cite{klainerman_null}.  It is
proved by localizing and applying standard Sobolev embeddings in the $r, \omega$
variables.  The decay results upon adjusting the volume element
$dr\,d\sigma(\omega)$ to match that of $\R^3$ in spherical coordinates.
\begin{lemma}
  \label{lemma_weighted_Sobolev}
 For $h\in C^\infty(\R^3)$ and $R\ge 1$, we have
  \begin{equation} 
    \label{weighted_Sobolev}
    \|h\|_{L^\infty(R/2<|x|<R)} \lesssim R^{-1} \|Z^{\le 2} h\|_{L^2(R/4<|x|<2R)}.
  \end{equation}
\end{lemma}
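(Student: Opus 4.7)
The plan is to localize to the relevant annulus, pass to spherical coordinates $(r,\omega)$, apply the standard three-dimensional Sobolev embedding $H^2\hookrightarrow L^\infty$ on $\R\times\S^2$, and then convert the resulting $L^2(dr\,d\sigma)$ norm back to an $L^2(dx)$ norm. The decay factor $R^{-1}$ in \eqref{weighted_Sobolev} will arise cleanly from the discrepancy between the flat measure $dr\,d\sigma$ and the Euclidean measure $dx=r^2\,dr\,d\sigma$, which on the annulus $r\sim R$ supplies a factor of $R^{2}$.

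Concretely, I would choose $\chi\in C^\infty_0((R/4,2R))$ with $\chi\equiv 1$ on $[R/2,R]$ and $|\chi^{(j)}|\lesssim R^{-j}$, and set $\tilde h(r,\omega)=\chi(r)\,h(r\omega)$, extended by zero outside the support of $\chi$. Then $\|h\|_{L^\infty(R/2<|x|<R)}\le\|\tilde h\|_{L^\infty(\R\times\S^2)}$, and since $\R\times\S^2$ is a fixed three-dimensional Riemannian manifold, Sobolev embedding gives
\[
\|\tilde h\|_{L^\infty(\R\times\S^2)}^2\ \lesssim\ \sum_{j+|\alpha|\le 2}\|\partial_r^{\,j}\nabla_\omega^{\alpha}\tilde h\|_{L^2(dr\,d\sigma)}^2,
\]
where $\nabla_\omega$ denotes the Levi-Civita derivative on $\S^2$. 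The key algebraic observation is that the generators of rotations $\Omega$ are tangent to the coordinate spheres and, expressed in angular variables, coincide with standard vector fields on $\S^2$ (with no factor of $r$), so $|\nabla_\omega^{\alpha}h(r\omega)|\lesssim|\Omega^{\le |\alpha|}h|(r\omega)\lesssim|Z^{\le |\alpha|}h|$.

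It remains to bound each term on the right by $R^{-2}\|Z^{\le 2}h\|_{L^2(R/4<|x|<2R)}^{2}$. A Leibniz expansion writes $\partial_r^{\,j}\nabla_\omega^{\alpha}(\chi h)$ as a sum of terms $\chi^{(k)}\,\partial_r^{\,j-k}\nabla_\omega^{\alpha}h$ with $k\le j$, and converting the measure via $dr\,d\sigma=r^{-2}\,dx$ on the support of $\chi$ produces the crucial $R^{-2}$; the cutoff losses $|\chi^{(k)}|\lesssim R^{-k}$ are harmless since $R\ge 1$. Each resulting term is of the form $R^{-2-2k}\|\partial^{\le j-k}Z^{\le|\alpha|}h\|_{L^2(dx)}^{2}\lesssim R^{-2}\|Z^{\le 2}h\|_{L^2(dx)}^{2}$. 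The step that carries the real content is the identification of $\nabla_\omega^{\alpha}$ with $\Omega^{\alpha}$, so that angular derivatives cost no powers of $r$; everything else is bookkeeping. Taking square roots yields \eqref{weighted_Sobolev}.
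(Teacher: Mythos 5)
Your proof is correct and follows essentially the same route as the paper, which proves the lemma by localizing to the annulus, applying the standard Sobolev embedding in the $(r,\omega)$ variables (with $\Omega$ supplying the angular derivatives at no cost in $r$), and extracting the factor $R^{-1}$ from converting the measure $dr\,d\sigma$ to $r^{2}\,dr\,d\sigma\sim R^{2}\,dr\,d\sigma$ on $|x|\sim R$.
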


We now use the smallness of $h$ to absorb
some perturbative terms and first undertake \eqref{rp1}.  The following proposition
largely addresses the problem of multiple good derivatives.  In the
sequel, the perturbation will be a lower order term.  When this lower
order term is small in a weighted space with the $\tilde{p}$ more than twice
the choice of $p$ for the higher order factors, we are able to absorb
the perturbative factors, including the last term in \eqref{rp1},
which is the possible occurrence of multiple good derivatives.  The
resulting estimate is then quite similar to that used in \cite{FMM}
for the semilinear case.  The issue with multiple good derivatives
barely appears in the next section as it is entirely reduced to
demonstrating hypothesis \eqref{h_assumptions} below.
\begin{proposition}\label{prop_high}  Fix $0<p<1$.
Assume that $h\in C^2([0,\infty)\times \R^3)$ satisfies
\eqref{h_symmetry}.  Moreover, for $\tilde{p}>2p$, suppose
\begin{equation}
  \label{h_assumptions}
  \|Z^{\le 3} h\|_{L^\infty_tL^2_x}
+ \|\la r\ra^{\frac{\tilde{p}-1}{2}} Z^{\le 2}
  \good h\|_{L^2_tL^2_x}+
  \|\la r\ra^{\frac{\p-1}{2}}Z^{\le 3}(\omega_\alpha\omega_\beta h^{\alpha\beta})\|_{L^2_t
    L^2_x}
\le \delta
\end{equation}
for $\delta>0$ sufficiently small.  Let $u\in
C^2([0,\infty)\times\R^3)$ be so that for each $t\ge 0$,
\[r^{\frac{p+2}{2}}|\partial^{\le 1} Z^{\le N} u(t,x)|\to 0,\quad
  \text{ as } |x|\to \infty.\]
Then
\begin{multline}
  \label{high}
\|\la r\ra^{\frac{p}{2}}\good Z^{\le N} u\|_{L^\infty_tL^2_x} +
\|\la r\ra^{\frac{p-2}{2}} Z^{\le N} u\|_{L^\infty_t L^2_x} +
\|\partial Z^{\le N} u\|_{L^\infty_t L^2_x}
\\+
  \|\la r\ra^{\frac{p-1}{2}} \good Z^{\le N} u\|_{L^2_tL^2_x} + \|\la
  r\ra^{\frac{p-3}{2}} Z^{\le N} u\|_{L^2_tL^2_x} + \|Z^{\le N}
  u\|_{LE^1}
\\\lesssim \|\la r\ra^{\frac{p}{2}}\good Z^{\le N} u(0,\cd)\|_{L^2_x} + \|\la
 r\ra^{\frac{p-2}{2}} Z^{\le N} u(0,\cd)\|_{L^2_x} + \|\partial Z^{\le
   N} u(0,\cd)\|_{L^2_x} 
+\|\la
r\ra^{\frac{p+1}{2}} \Box_h Z^{\le N} u\|_{L^2_tL^2_x}.
\end{multline}
\end{proposition}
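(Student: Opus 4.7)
The strategy is to apply both \eqref{LEnull} and \eqref{rp1} to $Z^\mu u$ for every $|\mu|\le N$ and to sum over $\mu$. Since $[Z,\Box]=0$, applying the estimates to $Z^\mu u$ produces $\Box_h Z^\mu u$ on the right (with no commutator bookkeeping needed), which is exactly the driving term on the right of \eqref{high}, and the initial data terms match verbatim. The work is then to absorb every remaining perturbative contribution, all of them driven by $h$, into the left-hand side of \eqref{high} using the smallness \eqref{h_assumptions} and the weighted Sobolev estimate \eqref{weighted_Sobolev}.

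The inhomogeneous $\Box_h Z^\mu u$ integrals in \eqref{LEnull} and \eqref{rp1} are handled by a Cauchy--Schwarz that places the weight $\la r\ra^{(p+1)/2}$ on $\Box_h Z^\mu u$. The leftover $\la r\ra^{-(p+1)/2}(|\partial Z^\mu u|+|Z^\mu u|/\la r\ra)$ is bounded by $\|Z^\mu u\|_{LE^1}$ (using $p>0$ and the definition of $\|\cdot\|_{LE^1}$), while the leftover $\la r\ra^{(p-1)/2}|\good Z^\mu u|+\la r\ra^{(p-3)/2}|Z^\mu u|$ is already part of the LHS. A weighted Young's inequality then deposits the desired $\|\la r\ra^{(p+1)/2}\Box_h Z^{\le N}u\|_{L^2_tL^2_x}$ on the right, completing the treatment of these terms.

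For terms carrying $|h|$, $|\partial h|$, or $|h|/r$, I would apply \eqref{weighted_Sobolev} at each fixed $t$ to $h$ and to $\partial h$ (itself a $Z$-derivative of $h$); the first hypothesis in \eqref{h_assumptions} then yields $|h(t,x)|+|\partial h(t,x)|\lesssim \delta\la r\ra^{-1}$. Plugged into \eqref{LEnull} and \eqref{rp1}, this produces integrals whose weights are at worst $\la r\ra^{p-2}$; for the $\sup_t$ term of \eqref{rp1} one obtains $\la r\ra^{p-1}\le 1$, since $p<1$. Cauchy--Schwarz combined with $\la r\ra^{(p-2)/2}\le 1$ and a dyadic decomposition (whose geometric series $\sum_R R^{p-1}$ converges for $p<1$) bounds each such contribution by $\delta$ times a product of norms on the LHS of \eqref{high}, absorbable for $\delta$ small.

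The main obstacle is the family of terms
\[\int_0^\infty\!\!\int r^p\bigl(|\good h|+|\partial(\omega_\alpha\omega_\beta h^{\alpha\beta})|\bigr)|\partial Z^\mu u|\bigl(|\partial Z^\mu u|+|Z^\mu u|/r\bigr)\,dx\,dt,\]
the incarnation here of the ``multiple good derivatives'' difficulty. I would decompose $\R^3$ dyadically into shells $\{R/2<|x|<R\}$ and, on each shell, place the $h$-factor in $L^2_tL^\infty_x$: \eqref{weighted_Sobolev} together with the weighted pieces of \eqref{h_assumptions} gives $\|\good h\|_{L^2_tL^\infty_x}+\|\partial(\omega_\alpha\omega_\beta h^{\alpha\beta})\|_{L^2_tL^\infty_x}\lesssim \delta R^{-(\tilde p+1)/2}$ on such a shell (one power of $R^{-1}$ from the Sobolev embedding, the remaining $R^{-(\tilde p-1)/2}$ from extracting the weight). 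One copy of $\partial Z^\mu u$ is then placed in $L^\infty_tL^2_x$ and the other $|\partial Z^\mu u|$ or $|Z^\mu u|/r$ factor in $L^2_tL^2_x$, which is $\le R^{1/2}\|Z^{\le N}u\|_{LE^1}$. Accounting for the explicit weight $R^p$, the shell $R$ contributes $\delta R^{p-\tilde p/2}$ times an absorbable product; the strict inequality $\tilde p>2p$ makes the exponent negative, so the dyadic sum converges and the whole contribution is absorbable for $\delta$ small. This is precisely the mechanism by which the freedom in choosing $\tilde p$ larger than $2p$ substitutes for more traditional time-dependent vector-field arguments.
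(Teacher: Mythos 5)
Most of your argument tracks the paper's proof closely: the Cauchy--Schwarz step that puts $\la r\ra^{\frac{p+1}{2}}$ on $\Box_h Z^{\le N}u$, the pointwise bound $|h|+|\partial h|\lesssim \delta \la r\ra^{-1}$ obtained from \eqref{weighted_Sobolev} and the first term of \eqref{h_assumptions}, the dyadic sums that converge because $0<p<1$ (respectively $p>0$), and, most importantly, your treatment of the term with $|\good h|+|\partial(\omega_\alpha\omega_\beta h^{\alpha\beta})|$ is exactly the paper's resolution of \eqref{prop25lastpiece}: on a dyadic shell the $h$-factor is $O(\delta R^{-\frac{\p+1}{2}})$ in $L^2_tL^\infty_x$, the remaining factors give $\|\partial Z^{\le N}u\|_{L^\infty_tL^2_x}\cdot R^{1/2}\|Z^{\le N}u\|_{LE^1}$, and the sum of $R^{p-\p/2}$ converges precisely because $\p>2p$.

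The genuine gap is that you apply \eqref{rp1} to $Z^\mu u$ on all of $\R^3$, whereas \eqref{rp1} carries pure $r$-weights, not $\la r\ra$-weights. The paper applies \eqref{rp1} only to $\chi_{>1}(|x|)Z^{\le N}u$, with $\chi_{>1}$ vanishing on the unit ball, relies on \eqref{LEnull} alone near the origin, and pays for this with the commutator $[\Box_h,\chi_{>1}]Z^{\le N}u$, which is supported in $\{1\le |x|\le 2\}$ and is absorbed via $\|Z^{\le N}u\|^2_{LE^1}$ (using $H^2(\R^3)\subset L^\infty(\R^3)$ and the first term of \eqref{h_assumptions}). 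Without this localization several of your steps break down on $|x|\le 1$. First, the data terms produced by \eqref{rp1} are $\|r^{\frac{p-2}{2}}Z^\mu u(0,\cd)\|_{L^2}$ and $\|r^{\frac p2}\good Z^\mu u(0,\cd)\|_{L^2}$, which do not ``match verbatim'' the $\la r\ra$-weighted data in \eqref{high}; near the origin $r^{\frac{p-2}{2}}\gg \la r\ra^{\frac{p-2}{2}}$ and an additional Hardy-type argument would be needed. More seriously, the error term $\int_0^\infty\int r^p\frac{|h|}{r}\,|\partial Z^\mu u|\bigl(|\partial Z^\mu u|+\frac{|Z^\mu u|}{r}\bigr)\,dx\,dt$ in \eqref{rp1} has weight $r^{p-1}|h|$; since \eqref{weighted_Sobolev} only applies for $R\ge 1$, near the origin your pointwise bound is merely $|h|\lesssim\delta$, leaving $\delta\int_{|x|\le 1} r^{p-1}|\partial Z^\mu u|^2$, and this singular weight (recall $p<1$) is not controlled by any of the $\la r\ra$-weighted norms or the $LE^1$ norm on the left of \eqref{high} unless you spend additional vector fields via Sobolev, which is unaffordable at the top order $N$. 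So your assertion that the surviving weights are ``at worst $\la r\ra^{p-2}$'' is valid only on $|x|\gtrsim 1$, which is exactly what the cutoff is there to arrange; inserting it (and estimating the resulting commutator) repairs the proof and reduces it to the paper's argument.
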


\begin{proof}
  We first apply \eqref{rp1} to $\chi_{>1}(|x|) Z^{\le N} u$ where
  $\chi_{>1}(r)$ is a smooth function that vanishes for $r\le 1$ and is
  identically $1$ for $r>2$.  Using
  a Sobolev embedding, $H^2(\R^3)\subseteq L^\infty(\R^3)$, and the
  bound for the first term in \eqref{h_assumptions}, it follows that 
\[\int_0^\infty\int r^p |[\Box_h,\chi_{>1}] Z^{\le N} u||\good
  (\chi_{>1} Z^{\le N} u)|\,dx\,dt\lesssim (1+\|\partial^{\le 1}
  h\|_{L^\infty_t L^\infty_x}) \|Z^{\le N} u\|^2_{LE^1} \lesssim
  \|Z^{\le N} u\|^2_{LE^1}.\]
Thus by subsequently applying \eqref{LEnull} to $Z^{\le N} u$, we see that
the square of the left side of \eqref{high} is bounded by
\begin{multline}
  \label{high_1}
 \|\la r\ra^{\frac{p}{2}}\good Z^{\le N} u(0,\cd)\|^2_{L^2_x} + \|\la
 r\ra^{\frac{p-2}{2}} Z^{\le N} u(0,\cd)\|^2_{L^2_x} + \|\partial Z^{\le
   N} u(0,\cd)\|^2_{L^2_x} 
\\+ \sup_t \int \la r\ra^p |h| 
\Bigl(|\partial Z^{\le N} u| + \frac{|Z^{\le N} u|}{\la r\ra}\Bigr)^2\,dx
\\
+ \int_0^\infty \int
    \Bigl(\la r\ra^p |\good Z^{\le N} u|+|\partial Z^{\le N} u| + \la
    r\ra^{p-1} |Z^{\le N} u|\Bigr)
    |\Box_h Z^{\le N} u|\,dx\,dt
\\+\int_0^\infty \int \la r\ra^p |\partial h|
\Bigl(|\good Z^{\le N} u| + \frac{|Z^{\le N} u|}{\la
  r\ra}\Bigr)^2\,dx\,dt 
+\int_0^\infty \int |\parital h| |\good Z^{\le N} u| \Bigl(|\partial
Z^{\le N}u| + \frac{|Z^{\le N}u|}{r}\Bigr)\,dx\,dt
\\
+\int_0^\infty \int \la r\ra^p \Bigl(\frac{|h|}{\la r\ra} + |\good h|
+ |\partial(\omega_\alpha\omega_\beta
h^{\alpha\beta})|\Bigr)\Bigl(|\partial Z^{\le N} u| + \frac{|Z^{\le
    N}u|}{r}\Bigr)^2\,dx\,dt.
\end{multline}

By the Cauchy-Schwarz inequality and the fact that $p>0$, we obtain
\begin{multline*}
  \int_0^\infty \int
    \Bigl(\la r\ra^p |\good Z^{\le N} u|+|\partial Z^{\le N} u| + \la
    r\ra^{p-1} |Z^{\le N} u|\Bigr)
    |\Box_h Z^{\le N} u|\,dx\,dt
\\\le \frac{1}{2}\Bigl(\|\la r\ra^{\frac{p-1}{2}}\good
Z^{\le N} u\|^2_{L^2_tL^2_x} + \|Z^{\le N} u\|_{LE^1}^2 + \|\la
r\ra^{\frac{p-3}{2}}Z^{\le N} u\|^2_{L^2_tL^2_x}\Bigr) + C \|\la
r\ra^{\frac{p+1}{2}} \Box_h Z^{\le N} u\|_{L^2_tL^2_x}^2.
\end{multline*}
The first three terms in the right side can be absorbed by the square
of the left
side of \eqref{high}..

We will proceed to showing that the fourth, sixth, seventh, and eighth
terms of \eqref{high_1} can be bounded by a constant that can be
chosen sufficiently small times the square of the left side of
\eqref{high}.  These terms can again be absorbed, which will complete
the argument.

Since $p<1$, using \eqref{weighted_Sobolev}, a standard Hardy inequality
\begin{equation}\label{xhardy}
 \|r^{-1} u\|_{L^2(\R^3)} \lesssim \|\nabla
u\|_{L^2(\R^3)},
\end{equation}
and \eqref{h_assumptions} results in
\[\sup_t \int \la r\ra^p |h|\Bigl(|\partial Z^{\le N} u| + \frac{|Z^{\le N}
    u|}{\la r\ra}\Bigr)^2\,dx \lesssim \delta \|\partial Z^{\le N}
  u\|^2_{L^\infty_t L^2_x},\]
which suffices for the fourth term in \eqref{high_1}.

Proceeding to the sixth term in \eqref{high_1}, we apply
\eqref{weighted_Sobolev} and \eqref{h_assumptions} to get
\[\int_0^\infty \int \la r\ra^p |\partial h|\Bigl(|\good Z^{\le N}
  u|+\frac{|Z^{\le N} u|}{\la r\ra}\Bigr)^2 \,dx\,dt \lesssim\delta
  \Bigl(\|\la r\ra^{\frac{p-1}{2}} \good Z^{\le N}u\|^2_{L^2_tL^2_x} +
  \|\la r\ra^{\frac{p-3}{2}} Z^{\le N} u\|^2_{L^2_tL^2_x}\Bigr).\]
Similarly, since $p>0$,
\[\int_0^\infty \int |\partial h| |\good Z^{\le N}u|\Bigl(|\partial
  Z^{\le N} u| + \frac{|Z^{\le N}u|}{r}\Bigr)\,dx\,dt
\lesssim \delta \|\la r\ra^{\frac{p-1}{2}} \good Z^{\le N}
u\|_{L^2_tL^2_x} \|Z^{\le N} u\|_{LE^1}.\]
And since $p<1$, \eqref{weighted_Sobolev} and \eqref{h_assumptions} give
\[\int_0^\infty \int \la r\ra^{p-1} |h| \Bigl(|\partial Z^{\le N} u| + \frac{|Z^{\le
    N}u|}{\la r\ra}\Bigr)^2\,dx\,dt
\lesssim \delta \|Z^{\le N} u\|_{LE^1}^2.
\]

It remains to establish
\begin{equation}\label{prop25lastpiece}\int_0^\infty \int \la r\ra^p \Bigl(|\good h| +
  |\partial(\omega_\alpha\omega_\beta
  h^{\alpha\beta})|\Bigr)\Bigl(|\partial Z^{\le N} u|+\frac{|Z^{\le
      N}u|}{r}\Bigr)^2\,dx\,dt
\lesssim \delta \|\partial Z^{\le N} u\|_{L^\infty_tL^2_x} \|Z^{\le
  N}u\|_{LE^1}.\end{equation}
The left side of \eqref{prop25lastpiece} is bounded by
\[\sum_{j\ge 0} 2^{pj} \int_0^\infty \int_{\{2^{j-1}\le \la x \ra\le
    2^j\}} \Bigl(|\good h| + |\partial(\omega_\alpha\omega_\beta
  h^{\alpha\beta})|\Bigr) \Bigl(|\partial Z^{\le N} u|+\frac{|Z^{\le
      N} u|}{r}\Bigr)^2\,dx\,dt.\]
Applying \eqref{weighted_Sobolev}, the Schwarz inequality, and the
Hardy inequality \eqref{xhardy},  this is
controlled by
\[
 \Bigl(\sum_{j\ge 0} 2^{j(p-\frac{\p}{2})}\Bigr)  \Bigl(\|\la r\ra^{\frac{\p-1}{2}} Z^{\le 2}\good
  h\|_{L^2_tL^2_x} +
  \|\la r\ra^{\frac{\p-1}{2}} Z^{\le 2}
  \partial(\omega_\alpha\omega_\beta
  h^{\alpha\beta})\|_{L^2_tL^2_x}\Bigr) \|\partial Z^{\le N} u\|_{L^\infty_t L^2_x} \|Z^{\le N} u\|_{LE^1}.
\]
Using the bound on the last two terms of
\eqref{h_assumptions}, as $\p>2p$, \eqref{prop25lastpiece} follows.
\end{proof}

We next consider a result analogous to Proposition~\ref{prop_high} for
the lower order energy, which has the larger weight $\tilde{p}$.  The
proof proceeds similarly but is based instead on \eqref{rp2}.  It is
this estimate that will allow us to show \eqref{h_assumptions} in the
sequel, which thus addresses the issue with multiple good
derivatives.  We note that the following will be applied at a lower order
than \eqref{high}, and as such, we will be able to handle the loss of
a vector field that occurs in the last two terms of the estimate.
\begin{proposition}\label{prop_low}  Fix $0<p<1$.
Assume that $h\in C^2([0,\infty)\times \R^3)$ satisfies
\eqref{h_symmetry} and, for $2p<\tilde{p}<2$, \eqref{h_assumptions}
with $\delta>0$ sufficiently small.  Let $u\in
C^2([0,\infty)\times\R^3)$ be so that for each $t\ge 0$,
\[r^{\frac{p+2}{2}}|\partial^{\le 1} Z^{\le N} u(t,x)|\to 0,\quad
  \text{ as } |x|\to \infty.\]
Then
\begin{multline}
  \label{low}
\|\la r\ra^{\frac{\tilde{p}}{2}}\good Z^{\le N-1} u\|_{L^\infty_tL^2_x} +
\|\la r\ra^{\frac{\tilde{p}-2}{2}} Z^{\le N-1} u\|_{L^\infty_t L^2_x} +
  \|\la r\ra^{\frac{\tilde{p}-1}{2}} \good Z^{\le N-1} u\|_{L^2_tL^2_x} + \|\la
  r\ra^{\frac{\tilde{p}-3}{2}} Z^{\le N-1} u\|_{L^2_tL^2_x} 
\\\lesssim \|\la r\ra^{\frac{\p}{2}}\good Z^{\le N-1} u(0,\cd)\|_{L^2_x} + \|\la
 r\ra^{\frac{\p-2}{2}} Z^{\le N-1} u(0,\cd)\|_{L^2_x} 
+\|\la
r\ra^{\frac{\p+1}{2}} \Box_h Z^{\le N-1} u\|_{L^2_tL^2_x}
\\+ \|Z^{\le N} u\|_{LE^1} + \|\partial Z^{\le N} u\|_{L^\infty_tL^2_x}.
\end{multline}
\end{proposition}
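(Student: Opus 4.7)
The plan is to parallel the proof of Proposition~\ref{prop_high}, applying \eqref{rp2} with weight $\tilde p$ in place of \eqref{rp1}. Concretely, apply \eqref{rp2} to $\chi_{>1}(|x|)Z^{\le N-1}u$, where $\chi_{>1}$ is a smooth cutoff vanishing for $r\le 1$ and equal to one for $r\ge 2$. The commutator $[\Box_h,\chi_{>1}]Z^{\le N-1}u$ is supported in $\{1\le|x|\le 2\}$, so (using the bound $\|\partial^{\le 1}h\|_{L^\infty}\lesssim 1$ that follows from \eqref{h_assumptions}) it contributes at most $\|Z^{\le N}u\|_{LE^1}^2$, which is already on the right side of \eqref{low}. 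Since no $LE^1$ norm of $Z^{\le N-1}u$ appears on the left side of \eqref{low}, we do not need to combine with \eqref{LEnull}; the interior contribution is absorbed directly into the right side.

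The initial-data terms and the forcing term of \eqref{rp2} pass through essentially as in Proposition~\ref{prop_high}, with Cauchy--Schwarz pairing the forcing factor with $\|\la r\ra^{(\tilde p-1)/2}(|\good u|+|u|/r)\|_{L^2_tL^2_x}$ from the left side of \eqref{low}. For every term in \eqref{rp2} whose coefficient is $|h|$ or $|\partial h|$ without a good derivative on $h$, we first use \eqref{weighted_Sobolev} together with $\|Z^{\le 3}h\|_{L^\infty_tL^2_x}\le\delta$ to obtain the pointwise bound $|h(t,x)|+|\partial h(t,x)|\lesssim\delta/r$, and then distribute the remaining weight by Cauchy--Schwarz so that the surviving factors match either the left side of \eqref{low} or the right-side norms $\|Z^{\le N}u\|_{LE^1}$ and $\|\partial Z^{\le N}u\|_{L^\infty_tL^2_x}$. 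For instance, for the $r^{\tilde p-1}|h|$ term the splitting $r^{\tilde p-2}=r^{(\tilde p-3)/2}\cdot r^{(\tilde p-1)/2}$ yields a factor $\|r^{(\tilde p-3)/2}\partial Z^{\le N}u\|_{L^2_tL^2_x}\lesssim\|Z^{\le N}u\|_{LE^1}$ (which converges dyadically because $\tilde p<2$) paired with a left-side factor. The last term $\int r^{\tilde p-3}(r|\partial h|+|h|)|u|^2$ likewise reduces to $\delta\|r^{(\tilde p-3)/2}Z^{\le N-1}u\|_{L^2_tL^2_x}^2$.

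The main obstacle is the second-to-last term of \eqref{rp2},
\[I:=\int_0^\infty\!\!\int r^{\tilde p}\bigl(|\good h|+|h^{\alpha\beta}\omega_\alpha\omega_\beta|\bigr)|\partial\partial^{\le 1}Z^{\le N-1}u|\bigl(|\good Z^{\le N-1}u|+|Z^{\le N-1}u|/r\bigr)\,dx\,dt,\]
where the large weight $r^{\tilde p}$ (rather than $r^p$) defeats the convergent geometric series $\sum 2^{j(p-\tilde p/2)}$ used for the analogous term in Proposition~\ref{prop_high}. My proposed fix is to combine a dyadic decomposition in $|x|\sim 2^j$ with a Cauchy--Schwarz across the shell index $j$. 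Using \eqref{weighted_Sobolev} to bound the coefficient on each shell by $2^{-j}\|Z^{\le 2}(\good h+h^{\alpha\beta}\omega_\alpha\omega_\beta)\|_{L^2_x}$ and applying H\"older in time and space (placing $L^\infty_tL^2_x$ on $|\partial\partial^{\le 1}Z^{\le N-1}u|\lesssim|\partial Z^{\le N}u|$, the sole point at which one vector field is lost) reduces $I$ to
\[\|\partial Z^{\le N}u\|_{L^\infty_tL^2_x}\sum_j 2^{(\tilde p-1)j}\|Z^{\le 2}(\good h+h^{\alpha\beta}\omega_\alpha\omega_\beta)\|_{L^2_tL^2_x(\{\la x\ra\sim 2^j\})}\|(|\good u|+|u|/r)\|_{L^2_tL^2_x(\{\la x\ra\sim 2^j\})}.\]
Distributing the factor as $2^{(\tilde p-1)j/2}\cdot 2^{(\tilde p-1)j/2}$ and applying Cauchy--Schwarz in $j$ collapses the sum to the product of full-space weighted norms $\|\la r\ra^{(\tilde p-1)/2}Z^{\le 2}(\good h+h^{\alpha\beta}\omega_\alpha\omega_\beta)\|_{L^2_tL^2_x}\cdot\|\la r\ra^{(\tilde p-1)/2}(|\good u|+|u|/r)\|_{L^2_tL^2_x}$, which by \eqref{h_assumptions} is at most $\delta$ times a left-side quantity of \eqref{low}. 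Hence $I\lesssim\delta\|\partial Z^{\le N}u\|_{L^\infty_tL^2_x}(\|\la r\ra^{(\tilde p-1)/2}\good Z^{\le N-1}u\|_{L^2_tL^2_x}+\|\la r\ra^{(\tilde p-3)/2}Z^{\le N-1}u\|_{L^2_tL^2_x})$, absorbable by AM--GM. Collecting all bounds and absorbing the $\delta$-small pieces into the left side gives \eqref{low}.
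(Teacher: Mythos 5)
Your proposal is correct and follows essentially the same route as the paper: apply \eqref{rp2} with the weight $\tilde p$ to $\chi_{>1}Z^{\le N-1}u$, treat the $|h|$ and $|\partial h|$ coefficients via \eqref{weighted_Sobolev} and \eqref{h_assumptions} with $\tilde p<2$, and absorb the $\delta$-small pieces. Your detailed handling of the $\bigl(|\good h|+|h^{\alpha\beta}\omega_\alpha\omega_\beta|\bigr)|\partial Z^{\le N}u|$ term --- dyadic decomposition, splitting the weight $2^{j(\tilde p-1)}$ in half, Cauchy--Schwarz across shells, and placing the good factor of $u$ in the $\langle r\rangle^{(\tilde p-1)/2}$-weighted $L^2_tL^2_x$ norm rather than in $LE^1$ --- is precisely the modification of \eqref{prop25lastpiece} that the paper's terse ``arguing similarly'' intends, and it yields the same bound stated there.
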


\begin{proof}
  As in the preceding proof,  \eqref{rp2} can be
applied to control the square of the left side of \eqref{low} by
  \begin{multline*}
    \|\la r\ra^{\frac{\p}{2}} \good Z^{\le N-1} u(0,\cd)\|^2_{L^2_x} +
    \|\la r\ra^{\frac{\p-2}{2}} Z^{\le N-1} u(0,\cd)\|^2_{L^2_x}
\\+ \sup_t \Bigl(\int \la r\ra^{\p} |h| \Bigl(|\partial Z^{\le N-1}u|+
\frac{|Z^{\le N-1}u|}{\la r\ra}\Bigr)
\Bigl(|\good Z^{\le N-1} u|+ \frac{|Z^{\le N-1} u|}{\la
  r\ra}\Bigr)\,dx\Bigr)
\\
+\int_0^\infty\int \la r\ra^{\p} |\Box_h Z^{\le N-1} u|\Bigl(|\good
Z^{\le N-1} u| + \frac{|Z^{\le N-1}u|}{\la r\ra}\Bigr)\,dx\,dt
\\+ \int_0^\infty \int \la r\ra^{\p-1} |h| |\partial Z^{\le N-1}
u|\Bigl(|\good Z^{\le N-1} u|+\frac{|Z^{\le N-1}u|}{\la r\ra}\Bigr)\,dx\,dt
\\+ \int_0^\infty\int \la r\ra^\p |\partial h| |\good Z^{\le N-1}
u|\Bigl(|\good Z^{\le N-1} u|+\frac{|Z^{\le N-1} u|}{\la
  r\ra}\Bigr)\,dx\,dt
\\+\int_0^\infty \int \la r\ra^\p \Bigl(|\good h| +
|h^{\alpha\beta}\omega_\alpha\omega_\beta|\Bigr) |\partial Z^{\le N}
u| \Bigl(|\good Z^{\le N-1} u| + \frac{|Z^{\le N-1} u|}{\la r\ra}\Bigr)\,dx\,dt
\\+ \int_0^\infty \int \la r\ra^{\p-3}\Bigl(r|\partial
h|+|h|\Bigr)|Z^{\le N-1} u|^2\,dx\,dt+\|Z^{\le N-1} u\|^2_{LE^1} + \|\partial Z^{\le N-1} u\|^2_{L^\infty_tL^2_x}.
  \end{multline*}

  Using the Schwarz inequality, we see that
\begin{multline*}\int_0^\infty\int \la r\ra^\p |\Box_h Z^{\le N-1} u|\Bigl(|\good
  Z^{\le N-1} u|+\frac{|Z^{\le N-1} u|}{\la r\ra}\Bigr)\,dx\,dt
\\\le \|\la r\ra^{\frac{\p+1}{2}} \Box_h Z^{\le N-1}
u\|_{L^2_tL^2_x}\Bigl(\|\la r\ra^{\frac{\p-1}{2}} \good Z^{\le N-1}
u\|_{L^2_tL^2_x} + \|\la r\ra^{\frac{\p-3}{2}} Z^{\le N-1}
u\|_{L^2_tL^2_x}\Bigr)
\end{multline*}
and the second factor can be absorbed by the square of the left side
of \eqref{low} after applying
$ab\le ca^2 + \frac{1}{4c}b^2$.

As above, we now seek to control the third, fifth, sixth, seventh, and
eighth terms by a small parameter times the square of the left side of
\eqref{low}.  These terms can then be absorbed for a sufficiently small
choice of the parameter, which will complete the proof.
  
We first note that
\begin{align*}\sup_t \int \la r\ra^{\p} |h| \Bigl(|\partial &Z^{\le
    N-1} u|+\frac{|Z^{\le N-1}u|}{\la r\ra}\Bigr) \Bigl(|\good Z^{\le
    N-1} u| + \frac{|Z^{\le N-1} u|}{\la r\ra}\Bigr)\,dx\\
  &\lesssim
\|Z^{\le 2} h\|_{L^\infty_tL^2_x} \|\partial Z^{\le N-1}
    u\|_{L^\infty_t L^2_x} \Bigl(\|\la r\ra^{\frac{\p}{2}} \good
    Z^{\le N-1} u\|_{L^\infty_t L^2_x} + \|\la r\ra^{\frac{\p-2}{2}}
    Z^{\le N-1} u\|_{L^\infty_t L^2_x}\Bigr)
  \\&\lesssim \delta \Bigl(\|\partial Z^{\le N-1} u\|^2_{L^\infty_t L^2_x}
+\|\la r\ra^{\frac{\p}{2}} \good Z^{\le N-1} u\|^2_{L^\infty_t
  L^2_x} + \|\la r\ra^{\frac{\p-2}{2}} Z^{\le N-1} u\|^2_{L^\infty_tL^2_x}\Bigr)
\end{align*}
where we have used \eqref{weighted_Sobolev}, \eqref{xhardy},
\eqref{h_assumptions}, and the assumption that $\p<2$.

For the remaining terms, we repeatedly use 
the hypothesis $\p<2$, \eqref{weighted_Sobolev}, and
\eqref{h_assumptions} and obtain the bounds:
\begin{multline*}
  \int_0^\infty \int \la r\ra^{\p-1} |h| |\partial Z^{\le N-1}
  u|\Bigl(|\good Z^{\le N-1} u|+\frac{|Z^{\le N-1} u|}{\la
    r\ra}\Bigr)\,dx\,dt
\\\lesssim \delta \|Z^{\le N-1} u\|_{LE^1} \Bigl(\|\la
r\ra^{\frac{\p-1}{2}} \good Z^{\le N-1} u\|_{L^2_tL^2_x} + \|\la
r\ra^{\frac{\p-3}{2}} Z^{\le N-1} u\|_{L^2_tL^2_x}\Bigr),
\end{multline*}
\begin{multline*}
  \int_0^\infty \int \la r\ra^\p |\partial h| |\good Z^{\le N-1}
  u|\Bigl(|\good Z^{\le N-1} u| + \frac{|Z^{\le N-1} u|}{\la
    r\ra}\Bigr)\,dx\,dt
\\\lesssim \delta \|\la r\ra^{\frac{\p-1}{2}} \good Z^{\le N-1}
u\|_{L^2_tL^2_x} \Bigl(\|\la
r\ra^{\frac{\p-1}{2}} \good Z^{\le N-1} u\|_{L^2_tL^2_x} + \|\la
r\ra^{\frac{\p-3}{2}} Z^{\le N-1} u\|_{L^2_tL^2_x}\Bigr),
\end{multline*}
and
\[  \int_0^\infty\int \la r\ra^{\p-3} \Bigl(r|\partial h| + |h|\Bigr)
  |Z^{\le N-1} u|^2\,dx\,dt \lesssim \delta \|\la
  r\ra^{\frac{\p-3}{2}} Z^{\le N-1} u\|^2_{L^2_tL^2_x},
\]
as desired.  In all three cases, the $\delta$ appears as a result of
the bound on the first term in \eqref{h_assumptions}.
Arguing similarly to \eqref{prop25lastpiece}, using the
bound on the last two terms of
\eqref{h_assumptions}, results in
\begin{multline*}
\int_0^\infty\int \la r\ra^{\p} \Bigl(|\good
h|+|h^{\alpha\beta}\omega_\alpha \omega_\beta|\Bigr) |\partial Z^{\le
  N} u| \Bigl(|\good Z^{\le N-1} u|+\frac{|Z^{\le N-1} u|}{\la
    r\ra}\Bigr)\,dx\,dt \\\lesssim \|\partial Z^{\le N}
  u\|^2_{L^\infty_t L^2_x}+
\delta\Bigl(\|\la
r\ra^{\frac{\p-1}{2}} \good Z^{\le N-1} u\|^2_{L^2_tL^2_x} + \|\la
r\ra^{\frac{\p-3}{2}} Z^{\le N-1} u\|^2_{L^2_tL^2_x}\Bigr).
\end{multline*}

Combining these bounds immediately gives \eqref{low}.
\end{proof}

\section{Proof of Theorem~\ref{thm:main}}
We begin by establishing the following lemma concerning the
interaction of the admissible vector fields with the null condition.
Variants of this lemma are commonplace.
\begin{lemma}
  Suppose that $A^{I,\alpha\beta}_{JK}$ and
  $B^{I,\gamma\alpha\beta}_{JK}$ satisfy \eqref{null_condition}.
  Then, on $|x|\ge 1$, 
  \begin{multline}
    \label{A_null}
    |Z^{\le N} (A^{I,\alpha\beta}_{JK} \partial_\alpha u^J
    \partial_\beta v^K)| \lesssim |Z^{\le N} \good u| | Z^{\le
      N/2} \partial v| + |Z^{\le N/2}\good u| |Z^{\le N}
    \partial v| \\+
    |Z^{\le N} \partial u| |Z^{\le N/2} \good
    v| + |Z^{\le
      N/2} \partial u| |Z^{\le N} \good v|,
  \end{multline}
and
 \begin{multline}
    \label{B_null2}
    |Z^{\le N} (B^{I,\gamma\alpha\beta}_{JK} \partial_\gamma u^J
  \partial_\alpha\partial_\beta v^K)|
  \lesssim |Z^{\le N} \good u| |Z^{\le
    N/2+1}\partial v| + |
  Z^{\le N/2} \good u| |Z^{\le N+1} \partial v| \\+ |Z^{\le N} \partial u|
 \Bigl( |Z^{\le N/2+1} \good v| + r^{-1} |Z^{\le N/2} \partial
 v|\Bigr) + |Z^{\le N/2} \partial
  u|\Bigl(|Z^{\le N+1} \good
  v| + r^{-1} |\partial Z^{\le N} v|\Bigr).
  \end{multline}
for any $N$.  Moreover,
for any multi-index $\mu$ with $|\mu|\le N$,
  \begin{multline}
    \label{B_null}
    |Z^\mu (B^{I,\gamma\alpha\beta}_{JK} \partial_\gamma u^J
  \partial_\alpha\partial_\beta v^K) - B^{I,\gamma\alpha\beta}_{JK}
  \partial_\gamma u^J \partial_\alpha \partial_\beta Z^\mu v^K|
  \lesssim |Z^{\le N} \good u| |Z^{\le
    N/2+1} \partial v| + |Z^{\le N/2} \good u| |Z^{\le N}\partial  v| \\+ |Z^{\le N} \partial u|
  \Bigl(|Z^{\le N/2+1} \good v| + r^{-1} |Z^{\le N/2} \partial v|\Bigr) + |Z^{\le N/2} \partial
  u|\Bigl(|Z^{\le N} \good
  v| + r^{-1} |Z^{\le N}\partial  v|\Bigr).
  \end{multline}
\end{lemma}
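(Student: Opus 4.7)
The plan is to reduce each of \eqref{A_null}, \eqref{B_null2}, \eqref{B_null} to the basic observation that at the null vector $\omega=(-1,x/r)$ (which satisfies $\omega_0^2-\omega_1^2-\omega_2^2-\omega_3^2=0$), the null condition \eqref{null_condition} forces
\[A^{I,\alpha\beta}_{JK}\omega_\alpha\omega_\beta=0\quad\text{and}\quad B^{I,\gamma\alpha\beta}_{JK}\omega_\gamma\omega_\alpha\omega_\beta=0,\]
and then to propagate this null structure through the action of $Z^\mu$.

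The first step, paralleling \eqref{nullify}--\eqref{nullify2}, is a purely pointwise decomposition. Writing $\partial_\alpha=(\partial_\alpha-\omega_\alpha\partial_r)+\omega_\alpha\partial_r$, one checks that $\partial_\alpha-\omega_\alpha\partial_r$ is precisely an entry of $\good$ (for $\alpha=0$ it equals $\partial_t+\partial_r$; for $\alpha=i$ it equals the $i$-th angular derivative). Substituting into $A^{I,\alpha\beta}_{JK}\partial_\alpha u^J\partial_\beta v^K$ produces four terms, and the unique one carrying $\omega_\alpha\omega_\beta$ on both factors vanishes by the null condition, yielding the pointwise bound
\[|A^{I,\alpha\beta}_{JK}\partial_\alpha u^J\partial_\beta v^K|\lesssim |\good u||\partial v|+|\partial u||\good v|.\]
Applying the same split to each derivative in $\partial_\gamma u^J\partial_\alpha\partial_\beta v^K$, and using \eqref{commutator} to move $\partial$ past $\good$, gives the cubic analogue
\[|B^{I,\gamma\alpha\beta}_{JK}\partial_\gamma u^J\partial_\alpha\partial_\beta v^K|\lesssim |\good u||\partial \partial^{\le 1} v|+|\partial u|\bigl(|\partial\good v|+r^{-1}|\partial v|\bigr);\]
the $r^{-1}|\partial v|$ term arises from the $O(1/r)$ coefficients that come out of $[\partial,\omega_\beta]$ and $[\partial,\good]$.

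The second step is an induction on $|\mu|$ in which $Z^\mu$ is distributed via the Leibniz rule. The translations in $Z$ commute with $\partial$ and preserve the null form verbatim. For a rotation $\Omega_k$, the commutators $[\Omega_k,\partial_\alpha]$ are constant-coefficient linear combinations of $\partial_\gamma$'s, so after applying $\Omega_k$ once to either of the expressions above one obtains a sum of terms of the form
\[\tilde A^{I,\alpha\beta}_{JK}\partial_\alpha(Z^{\mu_1}u^J)\partial_\beta(Z^{\mu_2}v^K),\quad |\mu_1|+|\mu_2|\le|\mu|,\]
(and the cubic analogue) in which each $\tilde A$ is the contraction of $A$ with a matrix from the rotation algebra. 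Since the null cone is rotation-invariant, every such $\tilde A$ again satisfies \eqref{null_condition}. Applying the basic pointwise decomposition of the first step to each summand, and splitting by a standard pigeonhole so that one of $|\mu_1|,|\mu_2|$ is $\le|\mu|/2$, delivers \eqref{A_null} and \eqref{B_null2} directly.

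For \eqref{B_null} one runs the same Leibniz expansion but isolates the distinguished summand in which all of $Z^\mu$ falls on $\partial_\alpha\partial_\beta v^K$ with no commutator contributions; this summand is exactly $B^{I,\gamma\alpha\beta}_{JK}\partial_\gamma u^J\partial_\alpha\partial_\beta Z^\mu v^K$, which is subtracted off. In every remaining term at least one $Z$ acts on $\partial_\gamma u^J$ or on an $\omega$-coefficient generated by commutators, so at most $|\mu|-1$ vector fields act on $\partial\partial v^K$; this is what trades the $Z^{\le N+1}$ count on $v$ in \eqref{B_null2} for the $Z^{\le N}$ count in \eqref{B_null}. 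The main obstacle throughout is bookkeeping: one must verify that every coefficient $\tilde A$ produced by iterated commutators with $\Omega_k$ still satisfies the null condition, and must carefully track which factor receives up to $N/2$ versus up to $N$ vector fields. The $r^{-1}$ contributions to \eqref{B_null2} and \eqref{B_null} are the only genuinely nonstandard pieces and are bookkept via \eqref{commutator}.
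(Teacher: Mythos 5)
Your overall strategy is legitimate and classical, but it differs from the paper's and, as written, it does not prove the lemma in the stated form. The paper decomposes the null forms \emph{first}, writing $\partial_\alpha=(\partial_\alpha-\omega_\alpha\partial_r)+\omega_\alpha\partial_r$ and using $A^{I,\alpha\beta}_{JK}\omega_\alpha\omega_\beta=0$ and $B^{I,\gamma\alpha\beta}_{JK}\omega_\gamma\omega_\alpha\omega_\beta=0$ (plus, for the $B$-term, one commutation $(\partial_\alpha-\omega_\alpha\partial_r)\partial_\beta v=\partial_\beta(\partial_\alpha-\omega_\alpha\partial_r)v+O(r^{-1})$, which is the sole source of the $r^{-1}$ terms), and only \emph{then} applies the product rule. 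In that order the good derivative is attached directly to $u$ or $v$, the only variable coefficients are $\omega$'s (bounded, with bounded $Z$-derivatives, on $r\ge 1$), and the vector fields land \emph{outside} $\good$ --- which is exactly how the right-hand sides are phrased, since $|Z^{\le N}\good u|$ means $Z^\mu$ applied to $\good u$. Your order --- commute $Z^\mu$ through the constant-coefficient null form first (your rotation-invariance argument for why the commuted coefficients $\tilde A$, $\tilde B$ still satisfy \eqref{null_condition} is correct), then apply the pointwise decomposition --- instead produces factors of the form $|\good Z^{\mu_1}u|$, with the good derivative \emph{outside} the vector fields.

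That placement is where the gap lies. To reach the stated estimates you must convert $|\good Z^{\mu_1}u|$ into $|Z^{\le|\mu_1|}\good u|$, and by \eqref{commutator} each such exchange costs $r^{-1}|Z^{\le|\mu_1|}u|$; the pure-translation part of this remainder is $r^{-1}|\partial^{\le|\mu_1|-1}\partial u|$, a \emph{bad} derivative with only an $r^{-1}$ gain. Your chain therefore retains terms such as $r^{-1}|Z^{\le N/2}\partial u|\,|Z^{\le N}\partial v|$, which contain no good factor and are not majorized by the right side of \eqref{A_null}: for $u=v=\phi(t-r)$ one has $\good u=\good v\equiv 0$, so every term on the right of \eqref{A_null} vanishes, yet $\ang_l\partial_j u=-r^{-1}(\delta_{lj}-\omega_l\omega_j)\phi'\neq 0$, so the remainder $r^{-1}|\partial u|\,|\partial v|$ does not. (For \eqref{B_null2} and \eqref{B_null} the right sides do admit some $r^{-1}(\partial\cdots)(\partial\cdots)$ terms, but \eqref{A_null} admits none, so the mismatch is genuine, and even in the $B$-estimates your conversion errors do not obviously respect the stated $N/2$ versus $N$ bookkeeping.) The repair is essentially the paper's ordering: perform the $\omega$-decomposition before distributing $Z^\mu$, so that $Z$ never has to be commuted past $\good$; then the rotation-invariance observation is not needed at all, since the vector fields only ever fall on $\good u$, $\good v$, $\partial u$, $\partial v$, and the bounded $\omega$-coefficients.
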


\begin{proof}
  By \eqref{null_condition}, we have
  \[A^{I,\alpha\beta}_{JK} \partial_\alpha u^J \partial_\beta v^K =
A^{I,\alpha\beta}_{JK} (\partial_\alpha-\omega_\alpha\partial_r)u^J
\partial_\beta v^K + A^{I,\alpha\beta}_{JK} \omega_\alpha \partial_r
u^J (\partial_\beta-\omega_\beta\partial_r)v^K.
\]
The result \eqref{A_null} then follows from the product rule.

We write
\begin{multline*}B^{I,\gamma\alpha\beta}_{JK} \partial_\gamma u^J
  \partial_\alpha\partial_\beta v^K  = B^{I,\gamma\alpha\beta}_{JK}
  (\partial_\gamma-\omega_\gamma \partial_r) u^J
  \partial_\alpha\partial_\beta v^K + B^{I,\gamma\alpha\beta}_{JK} \omega_\gamma\partial_r u^J
  (\partial_\alpha-\omega_\alpha \partial_r)\partial_\beta v^K\\+
  B^{I,\gamma\alpha\beta}_{JK} \omega_\gamma \omega_\alpha \partial_r
  u^J \partial_r (\partial_\beta-\omega_\beta \partial_r) v^K.
\end{multline*}
We further note that
\begin{multline*}
  B^{I,\gamma\alpha\beta}_{JK} \omega_\gamma\partial_r u^J
  (\partial_\alpha-\omega_\alpha \partial_r)\partial_\beta v^K = B^{I,\gamma\alpha\beta}_{JK} \omega_\gamma\partial_r u^J
  \partial_\beta (\partial_\alpha-\omega_\alpha \partial_r) v^K +
  \frac{1}{r} B^{I,\gamma\alpha j}_{JK} \omega_\gamma\partial_r u^J
  \omega_\alpha \ang_j v^K
  \\+ \frac{1}{r}B^{I,\gamma j k}_{JK}\omega_\gamma \partial_r u^J
  (\delta_{jk}-\omega_j\omega_k)\partial_r v^K.
\end{multline*}
The desired results \eqref{B_null2} and \eqref{B_null} are again a result of the product
rule.
\end{proof}

We solve \eqref{main_equation} by considering an iteration where
$u_0\equiv 0$ and $u_k$ solves
\begin{equation}
  \label{iteration}
\begin{split}  \Box u^I_k& = A^{I,\alpha\beta}_{JK} \partial_\alpha u_{k-1}^J \partial_\beta
  u_{k-1}^K + B^{I,\gamma\alpha\beta}_{JK} \partial_\gamma u_{k-1}^J
  \partial_\alpha\partial_\beta u_k^K,\\
  u_k&(0,\cd)=f,\quad \partial_tu_k(0,\cd)=g.
\end{split}
\end{equation}
We let
$0<2p<\tilde{p}<2$.

{\em Boundedness:}  Our first task will be to show  uniform
boundedness of the sequence $(u_k)$.  We set
  \[
  M_k = \|\la r\ra^{\frac{p-1}{2}}  Z^{\le N} \good
  u_k\|_{L^2_tL^2_x}
+
\|\la r\ra^{\frac{\p-1}{2}} Z^{\le N-1} \good u_k\|_{L^2_tL^2_x}
 + \|Z^{\le N} \partial u_k\|_{L^\infty_tL^2_x} + \|Z^{\le N} u_k\|_{LE^1}.
\]

For $k=1$, we may take $h^{I,\alpha\beta}_{JK}\equiv 0$.  From
\eqref{high} and \eqref{low}, which are being applied to a homogeneous
equation, and \eqref{smalldata}, there exists a constant $C_0$ so that
\[M_1 \le C_0 \varepsilon.\]
We will use induction to show that
\begin{equation}\label{M_goal}M_k \le 2C_0\varepsilon,\quad \text{ for
    all $k\in\N$}.
\end{equation}

Assuming that the bound holds at the $(k-1)$st level, we
set $h^{I,\alpha\beta}_K =
-B^{I,\gamma\alpha\beta}_{JK} \partial_\gamma u^J_{k-1}$.
By \eqref{null_condition}, for any $I,J,K$, we have
\[h^{I,\alpha\beta}_{JK}\omega_\alpha\omega_\beta =
  -B^{I,\gamma\alpha\beta}_{JK}\omega_\alpha\omega_\beta\partial_\gamma u_{k-1}
  =-B^{I,\gamma\alpha\beta}_{JK}\omega_\alpha\omega_\beta(\partial_\gamma-\omega_\gamma\partial_r)u_{k-1}.\]
Thus, since $\p<2$ and using \eqref{commutator},
\begin{multline*}
  \|Z^{\le 3} h\|_{L^\infty_t L^2_x} +\|\la r\ra^{\frac{\p-1}{2}}
  Z^{\le 2}\good h\|_{L^2_tL^2_x} + \|\la r\ra^{\frac{\p-1}{2}}Z^{\le
    3}(\omega_\alpha\omega_\beta h^{\alpha\beta})\|_{L^2_tL^2_x}
    \\\lesssim
    \|Z^{\le 3} \partial u_{k-1}\|_{L^\infty_tL^2_x} + \|\la
    r\ra^{\frac{\p-1}{2}} Z^{\le 3} \good u_{k-1}\|_{L^2_tL^2_x} + \|Z^{\le 3} u_{k-1}\|_{LE^1}.
  \end{multline*}
  By the inductive hypothesis, this is $\mathcal{O}(\varepsilon)$,
  which establishes \eqref{h_assumptions}.
%
%
  Thus by \eqref{high} and
\eqref{low} it will suffice to establish
\begin{equation}
  \label{M_goal2}
  \|\la r\ra^{\frac{p+1}{2}}\Box_h Z^{\le N} u_k\|_{L^2_tL^2_x} +
\|\la r\ra^{\frac{\p+1}{2}} \Box_h Z^{\le N-1} u_k \|_{L^2_tL^2_x}
\lesssim M_{k-1}^2+M_{k-1}M_k.
\end{equation}
As noted previously the problem of multiple good derivatives is only an
artifact of considering estimates for perturbations of $\Box$.  With
\eqref{h_assumptions} established, Proposition~\ref{prop_high}
completely addresses the issue, and the subsequent argument is quite
reminiscent of the simpler semilinear case.

We first notice that
\[\Box_h Z^{\mu} u_k = Z^{\mu}(A^{I,\alpha\beta}_{JK} \partial_\alpha
  u_{k-1}^J \partial_\beta u_{k-1}^K) +
  Z^{\mu}(B^{I,\gamma\alpha\beta}_{JK}\partial_\gamma u_{k-1}^J
  \partial_\alpha\partial_\beta u_k^K) - B^{I,\gamma\alpha\beta}_{JK}
  \partial_\gamma u_{k-1}^J \partial_\alpha\partial_\beta
  Z^{\mu}u_k^K.\]
By \eqref{A_null} and \eqref{B_null}, it follows that
\begin{multline}
  \label{pBox}
  \|\la r\ra^{\frac{p+1}{2}} \Box_h Z^{\le N}u_k\|_{L^2_tL^2_x}
  \lesssim \|\la r\ra^{\frac{p+1}{2}} |Z^{\le N} \good
  u_{k-1}| |Z^{\le \frac{N}{2}} \partial u_{k-1}|\|_{L^2_tL^2_x}
  \\+ \|\la r\ra^{\frac{p+1}{2}} |Z^{\le \frac{N}{2}}\good
  u_{k-1}| | Z^{\le N} \partial u_{k-1}|\|_{L^2_tL^2_x}
 +\|\la r\ra^{\frac{p+1}{2}} |Z^{\le N} \good u_{k-1}|
 |Z^{\le \frac{N}{2}+1}\partial  u_k|\|_{L^2_tL^2_x}
 \\+ \|\la r\ra^{\frac{p+1}{2}} |Z^{\le \frac{N}{2}} \good
 u_{k-1}| |Z^{\le N}\partial u_k|\|_{L^2_tL^2_x}
 + \|\la r\ra^{\frac{p+1}{2}} |Z^{\le N} \partial  u_{k-1}|
 |Z^{\le \frac{N}{2}+1} \good u_k|\|_{L^2_tL^2_x}
 \\+\|\la r\ra^{\frac{p+1}{2}} |Z^{\le \frac{N}{2}} \partial u_{k-1}|
 |Z^{\le N} \good u_k|\|_{L^2_tL^2_x}
+ \|\la r\ra^{\frac{p-1}{2}} |Z^{\le \frac{N}{2}} \partial  u_{k-1}|
 |Z^{\le N} \partial u_{k-1}|\|_{L^2_tL^2_x}
 \\+\|\la r\ra^{\frac{p-1}{2}} | Z^{\le N} \partial u_{k-1}| | 
 Z^{\le \frac{N}{2}+1} \partial u_k|\|_{L^2_tL^2_x}
 + \|\la
 r\ra^{\frac{p-1}{2}} |Z^{\le \frac{N}{2}} \partial  u_{k-1}| |
 Z^{\le N} \partial  u_k|\|_{L^2_tL^2_x}.
\end{multline}
We notice that the last three terms provide the appropriate bounds
when $|x|\le 1$.  For each of the first six terms in the right, we
apply \eqref{weighted_Sobolev} to the term with fewer vector
fields and measure the good derivative factor in a
weighted $L^2_tL^2_x$-space and the other factor in an energy space
$L^\infty_tL^2_x$.  Provided $\frac{N}{2}+3\le N$, by
\eqref{weighted_Sobolev}, we have
\begin{equation}\label{block1}\|\la r\ra^{\frac{p+1}{2}} Z^{\le N} w \,Z^{\le \frac{N}{2}+1}
  v\|_{L^2_tL^2_x}
+\|\la r\ra^{\frac{p+1}{2}} Z^{\le \frac{N}{2}+1} w \, Z^{\le N}
  v\|_{L^2_tL^2_x}
  \lesssim \|\la r\ra^{\frac{p-1}{2}} Z^{\le N} w\|_{L^2_tL^2_x}
  \|Z^{\le N} v\|_{L^\infty_tL^2_x}\end{equation}
and, since $p<2$,
\begin{equation}\label{block2}\|\la r\ra^{\frac{p-1}{2}} Z^{\le N} w\, Z^{\le \frac{N}{2}+1}
  v\|_{L^2_tL^2_x}
  \lesssim \|Z^{\le N} w\|_{L^\infty_tL^2_x} \|Z^{\le N} v\|_{LE}.\end{equation}
Indeed, \eqref{block2} follows as the square of the left side is
controlled by
\[\sum_{j\ge 0} 2^{(p-1)j} \|Z^{\le N} w\, Z^{\le \frac{N}{2}+1}
  v\|^2_{L^2_tL^2_x(\R_+\times \{2^{j-1}\le\la x\ra\le 2^j\})},\]
which after an application of \eqref{weighted_Sobolev} is
\[
  \lesssim \Bigl(\sum_{j\ge 0} 2^{p-2}\Bigr) \|Z^{\le N}
  w\|^2_{L^\infty_t L^2_x} \sup_{j\ge 0} 2^{-j} \|Z^{\le
    \frac{N}{2}+3} v\|^2_{L^2_tL^2_x(\R_+\times \{2^{j-2}\le\la x\ra\le 2^{j+1}\})},
\]
from which \eqref{block2} follows readily.  Using \eqref{block1} and
\eqref{block2} repeatedly, it follows that the right side of
\eqref{pBox} is
\[\lesssim M_{k-1}^2 + M_{k-1}M_k.\]

This provides the bound for the first term in the left side of
\eqref{M_goal2}.  The bound for the second term is nearly identical
where all of the $p$ are replaced by $\tilde{p}$ and the $N$ by
$N-1$.

{\em Convergence:}  We now establish that the sequence $(u_k)$
is Cauchy.  It, thus, converges, and its limit is the desired
solution.

To this end, we set
\begin{equation}
\label{Ak}
  A_k = \|\la r\ra^{\frac{p-1}{2}} Z^{\le
    N-1}\good (u_k-u_{k-1})\|_{L^2_tL^2_x}
  + \|Z^{\le
    N-1}\partial (u_k-u_{k-1})\|_{L^\infty_tL^2_x} + \|Z^{\le
    N-1}(u_k-u_{k-1})\|_{LE^1}.
\end{equation}
We will prove that
\begin{equation}
  \label{A_goal}
  A_k \le \frac{1}{2}A_{k-1} \quad \text{ for all $k$.}
\end{equation}

We begin by noting
\begin{multline*}\Box (u_k^I-u_{k-1}^I) =
  A^{I,\alpha\beta}_{JK}\partial_\alpha (u_{k-1}^J - u_{k-2}^J)\partial_\beta
  u_{k-1}^K + A^{I,\alpha\beta}_{JK} \partial_\alpha u_{k-2}^J
  \partial_\beta (u_{k-1}^K-
  u_{k-2}^K )
\\+B^{I,\gamma\alpha\beta}_{JK} \partial_\gamma u_{k-1}^J
\partial_\alpha\partial_\beta (u_k^K-u_{k-1}^K)
+B^{I,\gamma\alpha\beta}_{JK} \partial_\gamma (u_{k-1}^J-u_{k-2}^J)
\partial_\alpha\partial_\beta u_{k-1}^K.
\end{multline*}
With $h^{I,\alpha\beta}_K = - B^{I,\gamma\alpha\beta}_{JK}
\partial_\gamma u_{k-1}^J$, 
as above, \eqref{M_goal} implies \eqref{h_assumptions}.  We may, thus,
apply \eqref{high}.  Since $u_k-u_{k-1}$ has vanishing Cauchy data, it
suffices to bound
\begin{equation}\label{A_goal2} \|\la r\ra^{\frac{p+1}{2}}\Box_h
  Z^{\le N-1}(u_k-u_{k-1})\|_{L^2_tL^2_x} 
      \lesssim (M_{k-1}+M_{k-2})A_{k-1} + M_{k-1}A_k
      \end{equation}
as we may then apply \eqref{M_goal} and absorb $A_k$ to the other
side, which will yield \eqref{A_goal} as long as $\varepsilon$ is
sufficiently small.

      Using \eqref{A_null}, \eqref{B_null}, and \eqref{B_null2}, we have
      \begin{multline}\label{ABoxh}
         \|\la r\ra^{\frac{p+1}{2}}\Box_h
         Z^{\le N-1}(u_k-u_{k-1})\|_{L^2_tL^2_x} \\\lesssim
         \Bigl\|\la
  r\ra^{\frac{p+1}{2}} |Z^{\le N-1} \good
  (u_{k-1}-u_{k-2})| \Bigl(|Z^{\le \frac{N+1}{2}}
\partial  u_{k-1}|+| Z^{\le \frac{N-1}{2}}\partial
  u_{k-2}|\Bigr)\Bigr\|_{L^2_tL^2_x}
  \\
  +\Bigl\|\la r\ra^{\frac{p+1}{2}} |Z^{\le \frac{N-1}{2}}\good
  (u_{k-1}-u_{k-2})| \Bigl(|Z^{\le N} \partial  u_{k-1}|+|
  Z^{\le N-1}   \partial u_{k-2}|\Bigr)\Bigr\|_{L^2_tL^2_x} 
\\  +\Bigl\|\la r\ra^{\frac{p+1}{2}} |Z^{\le N-1} \partial (u_{k-1}-u_{k-2})|
  \Bigl(|Z^{\le \frac{N+1}{2}} \good
  u_{k-1}|+|Z^{\le \frac{N-1}{2}}\good
  u_{k-2}|\Bigr)\Bigr\|_{L^2_tL^2_x} 
  \\
  + \Bigl\|\la r\ra^{\frac{p+1}{2}} |Z^{\le \frac{N-1}{2}}\partial
  (u_{k-1}-u_{k-2})| \Bigl(|Z^{\le N}\good
  u_{k-1}|+|Z^{\le N-1}\good
  u_{k-2}|\Bigr)\Bigr\|_{L^2_tL^2_x}
  \\
  + \Bigl\|\la r\ra^{\frac{p+1}{2}} |Z^{\le \frac{N+1}{2}}\partial
  (u_{k}-u_{k-1})| | Z^{\le N-1} \good
  u_{k-1}|\Bigr\|_{L^2_tL^2_x}
\\  +\Bigl\|\la r\ra^{\frac{p+1}{2}} |Z^{\le N-1}\partial (u_{k}-u_{k-1})|
  |Z^{\le \frac{N-1}{2}}\good
  u_{k-1}|\Bigr\|_{L^2_tL^2_x} 
  \\
  +\Bigl\|\la r\ra^{\frac{p+1}{2}} |Z^{\le \frac{N+1}{2}}\good
  (u_{k}-u_{k-1})| |Z^{\le N-1} \partial  u_{k-1}|\Bigr\|_{L^2_tL^2_x} 
\\+\Bigl\|\la
  r\ra^{\frac{p+1}{2}} |Z^{\le N-1}\good
  (u_{k}-u_{k-1})| |Z^{\le \frac{N-1}{2}}\partial
  u_{k-1}|\Bigr\|_{L^2_tL^2_x}
\\
  + \Bigl\|\la r\ra^{\frac{p-1}{2}} |Z^{\le N-1}\partial
  (u_{k-1}-u_{k-2})| \Bigl(|Z^{\le \frac{N+1}{2}}\partial
  u_{k-1}|+|Z^{\le \frac{N-1}{2}}\partial u_{k-2}|\Bigr)\Bigr\|_{L^2_tL^2_x}
 \\
  + \Bigl\|\la r\ra^{\frac{p-1}{2}} | Z^{\le \frac{N-1}{2}}\partial
  (u_{k-1}-u_{k-2})| \Bigl(|Z^{\le N}\partial
  u_{k-1}|+ |Z^{\le N-1} \partial u_{k-2}|\Bigr)\Bigr\|_{L^2_tL^2_x}
\\+\Bigl\|\la
  r\ra^{\frac{p-1}{2}} |Z^{\le N-1}\partial
  (u_{k}-u_{k-1})| |Z^{\le \frac{N-1}{2}}\partial
  u_{k-1}|\Bigr\|_{L^2_tL^2_x}
 \\
  + \Bigl\|\la r\ra^{\frac{p-1}{2}} |Z^{\le \frac{N+1}{2}}\partial
  (u_{k}-u_{k-1})| |Z^{\le N-1}\partial
  u_{k-1}|\Bigr\|_{L^2_tL^2_x}.
    \end{multline}

We proceed with an argument that is akin to that used in the proof of
\eqref{M_goal}.  For each term, we apply \eqref{weighted_Sobolev} to
the lower order factor.  We then measure the ``good'' derivative
factor in a weighted $L^2_tL^2_x$ space, while the other factor is
placed into an energy space.  This approach, which is based in
\eqref{block1} and \eqref{block2}, shows that the first four
terms in the right side of \eqref{ABoxh} are bounded by
\begin{multline*}
  \|\la
  r\ra^{\frac{p-1}{2}} Z^{\le N-1}\good
  (u_{k-1}-u_{k-2})\|_{L^2_tL^2_x} \Bigl(\|Z^{\le \frac{N+5}{2}}\partial 
  u_{k-1}\|_{L^\infty_tL^2_x}+\|Z^{\le \frac{N+3}{2}}\partial 
  u_{k-2}\|_{L^\infty_tL^2_x}\Bigr)
  \\
  +\|\la r\ra^{\frac{p-1}{2}} |Z^{\le \frac{N+3}{2}}\good
  (u_{k-1}-u_{k-2})|\|_{L^2_tL^2_x} \Bigl(\|Z^{\le N}\partial  u_{k-1}\|_{L^\infty_tL^2_x}+|
  Z^{\le N-1} \partial u_{k-2}\|_{L^\infty_tL^2_x}\Bigr) 
\\  +\|Z^{\le N-1}\partial  (u_{k-1}-u_{k-2})\|_{L^\infty_tL^2_x}
  \Bigl(\|\la r\ra^{\frac{p-1}{2}} Z^{\le \frac{N+5}{2}}\good
  u_{k-1}\|_{L^2_tL^2_x}+\|\la r\ra^{\frac{p-1}{2}} Z^{\le \frac{N+3}{2}}\good
  u_{k-2}\|_{L^2_tL^2_x} \Bigr)
  \\
  + \| Z^{\le \frac{N+3}{2}}\partial 
  (u_{k-1}-u_{k-2})\|_{L^\infty_tL^2_x} \Bigl(\|\la
  r\ra^{\frac{p-1}{2}} Z^{\le N}\good
  u_{k-1}\|_{L^2_tL^2_x}+\|\la r\ra^{\frac{p-1}{2}} 
  Z^{\le N-1} \good
  u_{k-2}\|_{L^2_tL^2_x}\Bigr),
\end{multline*}
which, provided that $\frac{N+5}{2}\le N$, is
\[\lesssim A_{k-1}\Bigl(M_{k-1}+M_{k-2}\Bigr).\]
Similarly, the fifth through eighth terms in the right side of
\eqref{ABoxh} can be controlled by
\begin{multline*}
   \|Z^{\le \frac{N+5}{2}}\partial 
  (u_{k}-u_{k-1})\|_{L^\infty_tL^2_x} \|\la r\ra^{\frac{p-1}{2}}
  Z^{\le N-1}\good
  u_{k-1}\|_{L^2_tL^2_x}
\\  +\| Z^{\le N-1} \partial  (u_{k}-u_{k-1})\|_{L^\infty_tL^2_x}
  \|\la r\ra^{\frac{p-1}{2}} Z^{\le \frac{N+3}{2}}
  u_{k-1}\|_{L^2_tL^2_x} 
  \\
  +\|\la r\ra^{\frac{p-1}{2}} | Z^{\le \frac{N+5}{2}} \good
  (u_{k}-u_{k-1})\|_{L^2_tL^2_x} \| Z^{\le N-1} \partial u_{k-1}|\|_{L^\infty_tL^2_x} 
\\+\|\la
  r\ra^{\frac{p-1}{2}} |Z^{\le N-1} \good
  (u_{k}-u_{k-1})\|_{L^2_tL^2_x} \|Z^{\le \frac{N-1}{2}}\partial 
  u_{k-1}\|_{L^\infty_tL^2_x},
\end{multline*}
which in turn is $\lesssim A_k\cdot M_{k-1}$ provided that
$\frac{N+5}{2}\le N-1$.  Relying on the fact that $p<2$, the remaining terms in \eqref{ABoxh} (namely
the last four in the right side) are 
\begin{multline*}
\lesssim
  \|Z^{\le \frac{N+3}{2}}\partial 
  (u_{k-1}-u_{k-2})\|_{L^\infty_tL^2_x} \Bigl(\|Z^{\le N}
  u_{k-1}\|_{LE^1} + \|Z^{\le N-1} u_{k-2}\|_{LE^1}\Bigr)
\\+  \|Z^{\le N-1}\partial 
  (u_{k-1}-u_{k-2})\|_{L^\infty_tL^2_x} \Bigl(\|Z^{\le \frac{N+5}{2}}
  u_{k-1}\|_{LE^1} + \|Z^{\le \frac{N+3}{2}} u_{k-2}\|_{LE^1}\Bigr)
\\+\|Z^{\le N-1}\partial 
  (u_{k}-u_{k-1})\|_{L^\infty_tL^2_x} \|Z^{\le \frac{N+3}{2}}
  u_{k-1}\|_{LE^1}
\\+\|Z^{\le \frac{N+5}{2}}\partial 
  (u_{k}-u_{k-1})\|_{L^\infty_tL^2_x} \|Z^{\le N-1}
  u_{k-1}\|_{LE^1}
\end{multline*}
As this is 
\[\lesssim \Bigl(M_{k-2} + M_{k-1}\Bigr)A_{k-1}+M_{k-1}\cdot A_k,\]
we have completed the proof of \eqref{A_goal2}, which also completes
the proof of Theorem~\ref{thm:main}.



\end{document}